\numberwithin{table}{section}
\numberwithin{figure}{section}
\DeclareMathOperator*{\argmin}{arg\,min}
\DeclareMathOperator{\fl}{\operatorname{f\kern.2ptl}}
\DeclareMathOperator{\fllow}{\operatorname{f\kern.2ptl}_{\ell}}
\DeclareMathOperator{\flhigh}{\operatorname{f\kern.2ptl}_{h}}
\DeclareMathOperator{\flwork}{\operatorname{f\kern.2ptl}_{w}}
\DeclareMathOperator{\cond}{cond}
\def\ulow{u_{\ell}}
\def\uhigh{u_{h}}
\def\norm#1{\|#1\|_2}
\def\norm#1{\|#1\|}
\def\abs#1{|#1|}
\def\sse{\texttt{SSE}}
\title{Computing k-means in Mixed Precision\thanks{Version of May 25, 2026. 
			The first author acknowledges funding from the European Union (ERC, inEXASCALE, 101075632), and additionally acknowledges funding from the Charles University Research Centre program No. UNCE/24/SCI/005. The second author acknowledges funding from the France 2030 NumPEx Exa-MA (ANR-22-EXNU-0002) project managed by the French National Research Agency (ANR). 
		}
}
\author{Erin Carson\thanks{Department of Numerical Mathematics, Charles University, Prague, Czech Republic (\email{carson@karlin.mff.cuni.cz}).}
\and Xinye Chen\thanks{LIP6, Sorbonne Universit\'{e}, CNRS, Paris, France (\email{xinye.chen@lip6.fr}).}
\and Xiaobo Liu\thanks{Max Planck Institute for Dynamics of Complex Technical Systems, Magdeburg, Germany (\email{xliu@mpi-magdeburg.mpg.de}).}}
\begin{document}

\maketitle

\begin{abstract}
Motivated by the increasing availability of low- and mixed-precision arithmetic on modern hardware, we develop mixed-precision variants of Lloyd's algorithm for k-means clustering. The main ingredient is a family of mixed-precision kernels for Euclidean distance computation.  These kernels are guided by rounding-error analysis and use a simple reliability test to decide whether the expanded distance formula can be evaluated safely with low precision or a higher-precision correction by the direct distance formula is required. Thus, most distance computations can be carried out with low precision, while high-precision arithmetic is used selectively when cancellation may lead to a loss of accuracy.
We evaluate the proposed methods on large-scale distance-computation benchmarks, synthetic clustering problems, and image-segmentation tasks.  
The experiments verify that the mixed-precision kernels on GPUs can substantially improve performance while retaining the accuracy and convergence behavior of higher-precision baselines.  In particular, our CUDA implementations achieve orders-of-magnitude speedups over the CPU implementation in \texttt{scikit-learn} and up to $4\times$ faster than the IEEE double-precision \texttt{cdist} routine of \texttt{PyTorch} on NVIDIA A100 GPU, while providing improved numerical robustness in cancellation-prone regimes. 
The resulting mixed-precision k-means methods are effective for clustering and image segmentation, although the observed gains depend on the dataset, feature dimension, and number of clusters. These results demonstrate that mixed-precision distance kernels can offer a useful trade-off between performance and accuracy for k-means clustering and suggest that similar ideas may be beneficial for other distance-based machine learning methods.
\end{abstract}

\begin{keywords}
Euclidean distance, k-means, mixed-precision algorithm, low precision, GPU computing, clustering
\end{keywords}

\begin{MSCcodes}
65G50, 62H30, 68T05, 68W10
\end{MSCcodes}

\section{Introduction}
The k-means algorithm has been one of the most studied algorithms in data mining and machine learning for decades, and it remains widely used due to its simplicity. Given a prescribed number of clusters, it seeks a partition of the data in which points are assigned to representative centers according to their similarity in a certain metric. The k-means algorithm plays a critical role in vector quantization \cite{gegr91}, bioinfomatics~\cite{joka09}, computer vision \cite{chsu23}, anomaly detection \cite{gpb07}, database management \cite{orom04}, and nearest neighbor search \cite{jds10}. As stated in \cite{arva06}, with superpolynomial runtime, k-means is often very slow in practice. Many techniques have therefore been proposed to enhance the k-means algorithm regarding speed and scalability, e.g., parallelization and batch computations; associated variants include \texttt{k-means++}~\cite{arva07}, k-means$||$~\cite{bmvk12}, mini-batch k-means~\cite{nefl16}, and parallel k-means~\cite{frsi19}. 

\begin{table}[t]
	\caption{Parameters of four floating-point formats. Here $u$ denotes the unit roundoff, $x_{\min}$ and $x_{\max}$ denote the smallest positive normalized and the largest finite floating-point numbers, respectively, $t$ is the number of binary significand bits (including the implicit leading bit), and $e_{\min}$ and $e_{\max}$ are the corresponding exponents.}
	\label{table:float-arith-parameter}
	\centering 
	\scalebox{.85}{
		\begin{tabular}{c l l l r r r}
			\toprule
			Format & $u$ & $x_{\min}$ & $x_{\max}$ & $t$ & $e_{\min}$ & $e_{\max}$\\ 
			\midrule 
			binary16 (FP16) & $4.88\times 10^{-4}$ & $6.10\times 10^{-5}$ & $6.55\times 10^{4}$& 11 & -14 & 15 \\
			bfloat16 (BF16) & $3.91\times 10^{-3}$ & $1.18\times 10^{-38}$ & $3.39\times 10^{38}$ & 8 & -126 & 127 \\
			binary32 (FP32) & $5.96\times 10^{-8}$ & $1.18\times 10^{-38}$ & $3.40\times 10^{38}$ & 24 & -126 & 127 \\
			binary64 (FP64) & $1.11\times 10^{-16}$& $2.23\times 10^{-308}$ & $1.80\times 10^{308}$ & 53 & -1022 & 1023 \\
			\bottomrule
		\end{tabular}
	}
\end{table}

The increasing availability of lower-precision floating-point arithmetic, extending beyond the IEEE Standard 64-bit double precision (FP64) and 32-bit single precision (FP32) formats~\cite{ieee08re}, has motivated extensive research on low-precision number formats and arithmetic operations. In particular, 16-bit formats such as FP16 and BF16~\cite{ieee08re} are now widely supported in modern hardware and software environments; see \tablename~\ref{table:float-arith-parameter} for the parameters of the floating-point formats used in this paper.
By providing higher arithmetic throughput, reducing data movement, lowering energy consumption, and decreasing storage requirements, low-precision arithmetic has become an important tool for improving computational efficiency. It has been successfully exploited in a wide range of applications, including numerical linear algebra~\cite{aabc21, hima22}, unsupervised learning~\cite{cbd15}, deep neural networks~\cite{mnad18, rak23}, and other computational science and machine learning tasks. 
However, the use of low precision also introduces larger rounding errors, and rounding error analysis is therefore essential for identifying which parts of an algorithm can safely be performed in low precision~\cite{high:ASNA2}. Such analysis helps determine where mixed precision can be used judiciously, how computational resources can be exploited most effectively, and which precisions are needed to preserve the quality of the computed outputs.

Despite the prevalence of mixed-precision computing, it appears that its potential has largely remained unexploited in Euclidean distance computation and k-means clustering. 
In this work, we develop a mixed-precision variant of Lloyd's algorithm for $k$-means clustering and provide CUDA implementations of several algorithmic variants. 
Guided by rounding error analysis, we propose a mixed-precision approach to Euclidean distance computation. The resulting distance kernel performs most computations in low precision and invokes higher-precision correction only when indicated by an inexpensive reliability test, thereby combining the efficiency of low-precision arithmetic with improved numerical stability. 
To the best of our knowledge, this work is the first to systematically  analyze the numerical stability of two common formulas for computing Euclidean distances, their evaluation in mixed precision, and their incorporation into a mixed-precision Lloyd's k-means algorithm. The resulting mixed-precision k-means has already been integrated into the symbolic time series aggregation library \texttt{fABBA}~\cite{10.1145/3532622} that supports large-scale time series and signal processing, as well as downstream applications in large language models~\cite{11397443}.

The paper is organized as follows. In Section~\ref{sect:related-work} we review some recent work on k-means clustering with extra computing resources such as GPU and parallel computing.
Lloyd's k-means algorithm is then presented in Section~\ref{sect:kmeans}, where we briefly discuss two Euclidean distance computation formulas. 
In Section~\ref{sec:round} we discuss the numerical stability of Lloyd’s k-means method, including the stages of distance computation and center update. In particular, we perform rounding error analyses for two distance computation formulas and derive the new mixed-precision method. The resulting mixed-precision k-means method is presented in Section~\ref{sect:mp-kmeans}. Numerical experiments on variants of the mixed-precision distance computation method and k-means clustering are presented in Section~\ref{sec:numer-exper}. Conclusions are drawn in Section~\ref{sec:conclusion}.

Throughout the paper, inequalities between vectors hold componentwise, and the norm $\norm{\cdot}$ denotes the $\ell^2$ norm $\norm{x}=(\sum_i|x_i|^2)^{1/2}=\sqrt{x^Tx}$.
We use $|S|$ to represent the cardinality of a set $S$. 

\section{Related work}\label{sect:related-work}
In this section, we briefly review several works on accelerating k-means clustering that are closely related to our work, with particular emphasis on approaches that improve performance by exploiting additional hardware or parallel computing resources.
The k-means algorithm as well as the trending seeding of $D^2$ weighting~\cite{arva07} are known to be inherently sequential, which makes it tricky to implement the algorithm in a parallel way. 
Numerous novel approaches have been proposed to enhance scalability and speed. The authors of~\cite{bel13} proposed a distributed computing scheme for coreset construction that enables an acceptable low communication complexity and distributed clustering. Moreover, an initialization approach of $D^2$ weighting with MapReduce is proposed in~\cite{xqlm14} to enable the use of only one job for choosing the $k$ centers, which can significantly reduce the communication and I/O costs. 
A k-means algorithm based on GPUs using single instruction multiple data architectures is presented in~\cite{bhol09}; the algorithm performs the assignment of data points as well as cluster center updates on the GPU, which offers acceleration by orders of magnitude. The GPU power is also harnessed in~\cite{lbrz18} to update centroids efficiently; the k-means algorithm therein also executes a single-pass strategy to remove data transfers{---}up to 2 and 18.5 times higher throughput is achieved compared to multi-pass and cross-processing strategies, respectively. 
More recently, Li et al.~\cite{lfp23} proposed a batched GPU-based k-means algorithm, which achieves substantially faster performance than a standard GPU-based implementation while generating outputs of comparable quality.

\section{The k-means method}\label{sect:kmeans}
As a classical problem in machine learning and computational geometry,  k-means clustering seeks a codebook of $k$ vectors, i.e., $C=[c_1, \ldots, c_k] \in \mathbb{R}^{r \times k}$ in $n$ vectors $P=[p_1, \ldots, p_n] \in \mathbb{R}^{r\times n}$, such that $k \ll n$, where each $c_i$ is associated with a unique cluster $S_i \subseteq P$.  Letting $S_1, S_2, \ldots, S_k \subseteq P$ be the $k$ computed clusters, k-means aims to minimize the sum of squared errors (\sse), given by
\begin{equation}\label{eq:sse}
    \sse = \sum_{i=1}^{k} \phi (c_i, S_i),
\end{equation}
where $\phi$ denotes some energy function and $c_i$ denotes the center of cluster $S_i$. In this paper, $\phi (c_i, S_i) = \sum_{p\in S_i}\mathrm{dist}(p, c_i)^2$, where $\mathrm{dist}(p_i, p_j)$ denotes the Euclidean distance between $p_i$ and $p_j$, measured by $\norm{p_i - p_j} =\sqrt{(p_i-p_j)^T(p_i-p_j)}$.
In Euclidean space, Lemma~\ref{eq:converge} ensures that 
choosing the mean center $\mu_i = \sum_{p \in S_i} p / |S_i|$ as $c_i$ always leads the iterations to monotonically decrease \sse
in \eqref{eq:sse}. 
Lloyd's algorithm \cite{lloy82} (also known as the k-means algorithm) is a suboptimal solution of vector quantization to minimize \sse. For simplicity, we will henceforth 
denote the Euclidean norm as $\mathrm{dist}(\cdot)$.

%

\begin{lemma}[{\cite[Lem.~2.1]{arva07}}]\label{eq:converge}
	Given an arbitrary data point $p$ in the cluster $S$ whose mean center is 
	denoted by $\mu$, we have
	\begin{equation}\label{eq:center_benefit}
		\phi (p,S) =  \phi (\mu, S) + |S|\mathrm{dist}(p, \mu)^2.
	\end{equation}
\end{lemma}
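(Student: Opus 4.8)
The plan is to prove \eqref{eq:center_benefit} by a direct expansion of the defining sum, using the mean center $\mu$ as an intermediate reference point. First I would write
$$\phi(p', S) = \sum_{p \in S} \mathrm{dist}(p, p')^2 = \sum_{p \in S} \|p - p'\|_2^2,$$
decompose each summand as $p - p' = (p - \mu) + (\mu - p')$, and expand the squared Euclidean norm via bilinearity of the inner product to obtain
$$\|p - p'\|_2^2 = \|p - \mu\|_2^2 + 2(p - \mu)^T(\mu - p') + \|\mu - p'\|_2^2.$$

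Next I would sum this identity over all $p \in S$. The first term accumulates to $\phi(\mu, S)$ by definition; since the vector $\mu - p'$ is constant with respect to the summation index, the last term contributes $|S|\,\|\mu - p'\|_2^2 = |S|\,\mathrm{dist}(p', \mu)^2$. The only step requiring genuine care is the cross term: factoring out the constant $\mu - p'$ leaves $2(\mu - p')^T \sum_{p \in S}(p - \mu)$, and I would invoke the definition $\mu = \frac{1}{|S|}\sum_{p \in S} p$ to conclude that $\sum_{p \in S}(p - \mu) = \bigl(\sum_{p \in S} p\bigr) - |S|\mu = 0$, so this term vanishes identically.

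Collecting the three surviving contributions then yields exactly \eqref{eq:center_benefit}. I do not anticipate any real obstacle here: the statement is essentially the parallel-axis (variance-decomposition) identity, and the whole argument reduces to one application of the inner-product expansion together with the defining property of the arithmetic mean. The single point that must be verified with attention is that the residuals about $\mu$ sum to zero, which is precisely the property that makes the mean the energy-minimizing center and that underpins the monotonic decrease of \texttt{SSE} invoked in the surrounding discussion.
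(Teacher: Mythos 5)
Your proof is correct: the decomposition $p-p'=(p-\mu)+(\mu-p')$, the vanishing of the cross term via $\sum_{p\in S}(p-\mu)=0$, and the collection of the remaining terms give exactly \eqref{eq:center_benefit}. The paper itself does not prove this lemma but cites it from \cite[Lem.~2.1]{arva07}, whose proof is precisely this standard parallel-axis argument, so your approach coincides with the source.
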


\subsection{Lloyd's k-means algorithm}\label{sect:lloyd-kmeans}

\begin{algorithm2e}[!t]
	\caption{Lloyd's k-means algorithm.}
	\label{alg:kmeans}
	\KwIn{Data set $P=[p_i]_{i=1}^{n}$ in working precision $u$, number of clusters $k$,
		convergence tolerance $\tau>0$, and maximum number of iterations $t_{\max}$.}
	\KwOut{Centers $\{c_1,\ldots,c_k\}$ stored in precision $u$ and cluster assignments.}
	Normalize dataset $P$ in precision $u$\;
	Select initial centers $\{c_1^{(0)},\ldots,c_k^{(0)}\}$\;
	\For{$t \gets 1,\ldots,t_{\max}$}{
		\ForEach{$p_i \in P$}{
			\ForEach{$j=1,\ldots,k$}{
				$\widehat d_{ij} \gets \fl\bigl(\norm{p_i-c_j^{(t-1)}}^2\bigr)$\;\label{alg:k-means:line:forloop-start}
			}
			Assign $p_i$ to cluster $S_j^{(t)}$, where
			$j=\argmin_{s} \widehat d_{is}$\; \label{alg:k-means:line:forloop-end}
		}
		\ForEach{$j=1,\ldots,k$}{
			$c_j^{(t)} \gets \fl\left(
			\frac{1}{|S_j^{(t)}|}\sum_{p_i\in S_j^{(t)}} p_i
			\right)$\;
		}
		\If{$\max_j \norm{c_j^{(t)}-c_j^{(t-1)}} < \tau$}{
			\textbf{break}\;
		}
	}
	Recompute all distances in precision $u$ and assign each $p_i$ to its closest center\;
	\Return final centers $\{c_1,\ldots,c_k\}$ and cluster assignments
\end{algorithm2e}

It is well known that computing a globally optimal k-means clustering is NP-hard.  In practice, one therefore usually resorts to heuristic algorithms, the most classical of which is Lloyd's algorithm~\cite{lloy82}, often also referred to as the k-means algorithm~\cite{gray84}. Lloyd's k-means algorithm (given as Algorithm~\ref{alg:kmeans}), is a local improvement heuristic: starting from an initial set of cluster centers, it alternates between assigning each data point to its nearest center and updating each center as the mean of the points assigned to it. Although this procedure is simple and widely used, it may converge to a local minimum, has no general worst-case approximation guarantee~\cite{kmnp02}, and can have superpolynomial complexity in the worst case.

The quality and convergence behavior of Lloyd's algorithm depend strongly on the choice of the initial centers. A particularly successful initialization scheme is seeding by $D^2$ weighting, introduced by Arthur and Vassilvitskii~\cite{arva07}. The resulting method, commonly known as \texttt{k-means++}, is $O(\log k)$-competitive in expectation with the optimal clustering and is the default choice for the k-means implementation of \texttt{scikit-learn} \cite{pvgm11}{---}the popular,  comprehensive machine learning library. A parallel alternative is \texttt{k-means$\|$}~\cite{bmvk12}, which replaces the sequential $D^2$-sampling step in \texttt{k-means++} by an oversampling-based parallel procedure followed by a reclustering step.

We do not study initialization schemes in this paper, since our objective is to study the effect of mixed-precision arithmetic on the computationally dominant steps of Lloyd's iteration, in particular, the Euclidean distance computations. 
We therefore use a simple randomized initialization, selecting $k$ initial centers uniformly from the data, and apply the same initialization across the methods being compared. More sophisticated seeding strategies, such as \texttt{k-means++} or \texttt{k-means$\|$}, are orthogonal to the mixed-precision techniques developed here and could be combined with them in a complete implementation.

\subsection{Distance computation}\label{sec:intro-dist-comput}
The computationally dominant step in the k-means algorithm, even including cluster initialization, is the computation of pairwise distances.
Two natural approaches to forming the squared Euclidean distance $d(x, y)^2$ between two points $x,y\in\mathbb{R}^r$ are
\begin{equation}\label{eq:dist-direct}
 d(x, y)^2 = (x-y)^T (x-y),
\end{equation}
and
\begin{equation}\label{eq:dist-expanded}
 d(x, y)^2 = x^Tx - 2x^T y + y^T y.
\end{equation}
The second scheme~\eqref{eq:dist-expanded} is of particular computational interest and is the prevailing choice in algorithmic implementations, for example, in the \texttt{k-means} algorithm of the machine learning library \cite{pvgm11}. In \texttt{PyTorch}~\cite{paszke2019pytorch}, \texttt{torch.cdist}\footnote{https://docs.pytorch.org/docs/2.11/generated/torch.functional.cdist.html} by default uses~\eqref{eq:dist-direct} to compute pairwise Euclidean distances for small inputs but switches to a matrix-multiplication-based formulation related to~\eqref{eq:dist-expanded} for larger inputs; this behavior can be controlled explicitly through the \texttt{compute\_mode} argument.

If the distance formula~\eqref{eq:dist-direct} is used, it requires about $kn$ distance computations in each iteration of executing Lines~\ref{alg:k-means:line:forloop-start}--\ref{alg:k-means:line:forloop-end} of Algorithm~\ref{alg:kmeans} since each of the $n$ points $p_i$ must be compared with each of the $k$ centers $c_j$. Each distance evaluation costs about $3r$ floating-point operations (flops), and so the distance computation in one iteration costs about $3r kn$ flops, summing to an overall cost of $3Tr kn$ flops for $T$ iterations.

In contrast, the expanded formula~\eqref{eq:dist-expanded} allows the $n$ inner products $p_i^Tp_i$, $i=1\colon n$, to be precomputed and stored. This costs approximately $2r n$ flops and these values can then be reused across all iterations.
The center norms $c_j^Tc_j$ needs be recomputed whenever the centers are updated, costing approximately $2rk$ flops per iteration.  
The dominant cost is the is the computation of all inner products $p_i^Tc_j$, which
can be performed at a cost of $2rkn$ flops per iteration. Forming the final distance matrix from these terms costs only $O(kn)$ additional flops. Therefore,  the overall cost of distance computing via~\eqref{eq:dist-expanded} in a total of $T$ iterations is about $2rn + T(2rk+2rkn+3kn)$ flops.
The leading term is approximately $2Tr kn$ flops if $n\gg 1$ and $r\gg 1$, which is the case in practical applications.

More importantly, the expanded formula~\eqref{eq:dist-expanded} avoids duplicated computations by precomputing the squared norm values and enables formulation of the main computation (i.e., the inner products between data points and centres) as a level-3 basic linear algebra subprograms (BLAS) matrix--matrix multiplication~\cite{bddd02}. 
Let
\[
X =
\begin{bmatrix}
x_1 & x_2 & \ldots & x_m
\end{bmatrix}^T
\in \mathbb{R}^{m\times r},
\qquad
Y =
\begin{bmatrix}
y_1 & y_2 & \ldots & y_n
\end{bmatrix}^T
\in \mathbb{R}^{n\times r},
\]
where \(x_i,y_j\in\mathbb{R}^{r}\). Then the squared pairwise Euclidean distance matrix \(D\in\mathbb{R}^{m\times n}\), defined by
\[
D_{ij} = \|x_i-y_j\|_2^2 =
x_i^T x_i - 2x_i^T y_j + y_j^T y_j,
\qquad i=1,\ldots,m,\quad j=1,\ldots,n,
\]
can be written as
\begin{equation}\label{eq:dist-blas3}
D =
p\mathbf{1}_n^T - 2XY^T + \mathbf{1}_m q^T,
\end{equation}
where \(p_i=x_i^T x_i\), \(q_j=y_j^T y_j\),
and \(\mathbf{1}_m\in\mathbb{R}^{m}\), \(\mathbf{1}_n\in\mathbb{R}^{n}\) denote the vectors of all-ones.
This formulation evaluates the squared norms of \(p_i\) and \(q_j\) only once and reuses the values across all corresponding
pairs. The computationally dominant term \(-2XY^T\) can be evaluated by a GEMM,
\[
D \leftarrow \alpha AB+\beta C,
\]
with $A=X, B=Y^T, C=p \mathbf{1}_n^T + \mathbf{1}_m q^T, \alpha=-2$, and $\beta=1$, so the formulation~\eqref{eq:dist-blas3} can give substantially better performance on modern hardware than implementing~\eqref{eq:dist-direct} via level-2 BLAS.

\begin{table}
	\caption{Runtime comparison of distance formulas~\eqref{eq:dist-direct} and~\eqref{eq:dist-expanded}.}
	\label{tab:dist-formula}
	\centering
	\renewcommand{\arraystretch}{1.12}\resizebox{0.75\linewidth}{!}{%
		\begin{tabular}{l l r r r r}
			\toprule[1pt]
			& & \multicolumn{2}{c}{FP32} & \multicolumn{2}{c}{FP64} \\
			\cmidrule(lr){3-4} \cmidrule(lr){5-6}
			Data distribution & Formula & Time (ms) & Speedup & Time (ms) & Speedup \\
			\midrule
			\multirow{2}{*}{Random} & Direct~\eqref{eq:dist-direct} 
			& 30.64 & \multirow{2}{*}{$16.7 \times$} 
			& 57.84 & \multirow{2}{*}{$21.1 \times$} \\
			& Expanded~\eqref{eq:dist-expanded} & 1.84 &  & 2.74 & \\
			\midrule
			\multirow{2}{*}{Nearly located}	& Direct~\eqref{eq:dist-direct} 
			& 30.60 & \multirow{2}{*}{$17.0 \times$} 
			& 57.68 & \multirow{2}{*}{$21.3 \times$} \\
			& Expanded~\eqref{eq:dist-expanded} & 1.80 &  & 2.71 & \\
			\bottomrule[1pt]
		\end{tabular}
	}
\end{table}

To give a brief illustration of the efficiency of the two distance formulas, we computed distances between two feature matrices of dimension $128 \times 5,000$ (so $r=128$ and $n=5,000$), which requires a total of 25 million distance entries. 
\tablename~\ref{tab:dist-formula} presents the results, where we have tested two different data-distribution settings: randomly generated data and nearly located data; see Section~\ref{sec:numer-exper} for full details on the experiment environment. The expanded formula~\eqref{eq:dist-expanded} is clearly much faster than the direct formula~\eqref{eq:dist-direct} in both FP32 and FP64. 

\section{Numerical stability of the k-means method}\label{sec:round}
In this work, numerical stability is understood in the standard sense of backward and forward error analysis~\cite[sect.~1.5]{high:ASNA2}.
To study the numerical stability of the k-means method,
we need to look at its two main computational steps: distance computation and center update.
We will use the standard model of floating-point arithmetic introduced in~\cite[sect.~2.2]{high:ASNA2} for our stability analysis:
\begin{equation}\label{eq:fpmodel}
    \fl(x \hspace{2pt}\texttt{op}\hspace{2pt} y) = (x \hspace{2pt}\texttt{op}\hspace{2pt} y) (1 + \delta), \quad  |\delta| \le u,
\end{equation}
where $u$ denotes the unit roundoff associated with the floating-point number system, $x$ and $y$ are floating-point numbers, and \texttt{op} denotes
addition, subtraction, multiplication, or division.
For inner products of two vectors, we have~\cite[sect.~3.1]{high:ASNA2},
\begin{equation}\label{eq:fpmode-vecprod}
	\fl(x^Ty) = x^Ty + s,\quad |s|\le \gamma_r|x|^T|y|,
\end{equation}
where $\gamma_r:= r u/(1-r u)$ and $r u<1$. 
In addition to the working precision, with unit roundoff $u$, we consider two floating-point arithmetics with unit roundoffs $\ulow$ and $\uhigh$ that satisfy
\[
	0 < \uhigh \le u < \ulow < 1.
\]
We refer to these two arithmetics as the low and high precisions, respectively. By ``computing in precision $u_*$'', we mean that $u_*$ is the unit roundoff of the current precision of computation, where $u_*=\uhigh$, $u$, or $\ulow$. The operator $\fl(\cdot)$ with a subscript denotes the computation in the associated precision; when the subscript is omitted, the computation is understood to be performed in the working precision. Quantities computed in floating-point arithmetic wear a hat.

The process of computing vector inner products is backward stable~\cite[eq.~(3.4)]{high:ASNA2} but the forward accuracy depends also on the conditioning.  Consider the inner product $f(x):=x^Ty$, where $y$ is a fixed vector. This function has the normwise relative condition number~\cite[sect.~3.1]{high:FM} (in the $2$-norm)
\begin{equation*}
\cond(f,x) := \lim_{\varepsilon\to 0} \sup_{\norm{\Delta x}\le \varepsilon \norm{x}}  \frac{\abs{f(x+\Delta x) - f(x)}}{\varepsilon |f(x)|},
\end{equation*}
which is given explicitly by
\begin{equation}\label{eq:x^Ty-cond}
    \cond(f,x) = \frac{\norm{\nabla f }\norm{x}}{\abs{f(x)}} 
    = \frac{\norm{y}\norm{x}}{\abs{x^Ty}} = \frac{1}{|\cos\omega|},
\end{equation}
where $\omega\in[0,\pi]$ is the angle between $x$ and $y$. This shows that the vector dot product is more sensitive (has larger condition number) if the two vectors are close to being orthogonal. Therefore, 
we can expect that the relative forward error in $\fl(x^Ty)$ will be large when $x$ and $y$ are close to being orthogonal. On the other hand, if $y=x$, then $\cond(f,x) = 1$ and
$|\fl(x^Tx)-x^Tx|\le \gamma_r|x|^T|x|=\gamma_r x^Tx$, which shows high relative accuracy is guaranteed.

\subsection{Distance formulas}\label{sec:numer-stab-dist-comput}
We begin with the direct distance formula~\eqref{eq:dist-direct}. Its proof is a standard textbook exercise based on~\cite[sect.~3.1]{high:ASNA2}; a proof is also given in~\cite{chgu24b}.

\newcommand{\ddir}[1][]{\widehat{d}_{#1,\mathsf{dir}}}
\newcommand{\Dddir}[1][]{\Delta \widehat{d}_{#1,\mathsf{dir}}}
\newcommand{\dexp}[1][]{\widehat{d}_{#1,\mathsf{exp}}}
\newcommand{\Ddexp}[1][]{\Delta \widehat{d}_{#1,\mathsf{exp}}}

\begin{theorem}\label{thm:dist-diff}
Let $x, y \in \mathbb{R}^{r}$, and let $\ddir[r] = \fl((x-y)^T(x-y))$ be the squared distance computed via~\eqref{eq:dist-direct} in precision $u$. Then 
\begin{equation*}
\ddir[r] = d(x, y)^2 + \Dddir[r], \quad 
|\Dddir[r]| \le \gamma_{r+2} d(x, y)^2,
\end{equation*}
given that $(r+2) u < 1$.
\end{theorem}

Theorem~\ref{thm:dist-diff} shows that the formula~\eqref{eq:dist-direct} computes the distance to high relative forward accuracy in floating-point arithmetic. But the same \emph{cannot} be guaranteed for the other formula~\eqref{eq:dist-expanded}, as shown by the following result.

\begin{theorem}\label{thm:dist-exp}
Let $x, y \in \mathbb{R}^{r}$, and let $\dexp[r] = \fl( x^T x - 2 x^T y + y^T y)$ be the squared distance computed via \eqref{eq:dist-expanded} in precision $u$. Then
\begin{equation*}
    \dexp[r] = d(x,y)^2 + \Ddexp[r], \quad |\Ddexp[r]| \le 
\gamma_{r+2} (x^T x + 2|x|^T |y| + y^T y),
\end{equation*}
given that $(r+2) u < 1$.
\end{theorem}
\begin{proof}
Define
\[
\widehat p = \fl(x^Tx), \quad
\widehat q = \fl(x^Ty), \quad
\widehat s = \fl(y^Ty),
\]
so we have $ \dexp[r] = \fl\bigl(\fl(\widehat p - 2\widehat q)+\widehat s\bigr)$ since multiplication by $2$ does not incur rounding in binary floating-point arithmetic. By~\eqref{eq:fpmode-vecprod},
\[
\widehat p = x^Tx(1+\theta_1), \quad 
\widehat s = y^Ty(1+\theta_2), \quad, 
|\theta_1|, |\theta_2|\le \gamma_r,
\]
and
\[
\widehat q = x^Ty + \Delta q, \quad |\Delta q|\le \gamma_r |x|^T|y|.
\]
Moreover, there exist \(\delta_1,\delta_2\) with $|\delta_1|, |\delta_2| \le u$, such that $\dexp[r] = \bigl((\widehat p-2\widehat q)(1+\delta_1)+\widehat r\bigr)(1+\delta_2)$,
so
\begin{equation*}
\dexp[r] =
x^Tx(1+\theta_1)(1+\delta_1)(1+\delta_2) 
- 2(x^Ty+\Delta q)(1+\delta_1)(1+\delta_2) + y^Ty(1+\theta_2)(1+\delta_2).
\end{equation*}
Subtracting $d(x,y)^2 = x^Tx - 2x^Ty + y^Ty$ and using the standard
$\gamma$-notation product rule gives~\cite[Lem.~3.3]{high:ASNA2}
\[
|\dexp[r] - d(x,y)^2|
\le
\gamma_{r+2}x^Tx
+
2\gamma_{r+2}|x|^T|y|
+
\gamma_{r+2}y^Ty,
\]
which is the desired bound.
\end{proof}


The bound from Theorem~\ref{thm:dist-exp} implies that the distance formula \eqref{eq:dist-expanded} is not always stable. 
Indeed, for fixed $y$, consider the distance function
\begin{equation*}
    g(x) = d(x,y)^2 = x^Tx - 2x^Ty + y^Ty.
\end{equation*}
The gradient is given by 
\[
    \nabla g(x) = 2(x-y),
\]
and hence the normwise relative condition number is 
\begin{equation*}
    \cond(g,x) = \frac{\norm{\nabla g }\norm{x}}{\abs{g(x)}} 
    = \frac{2\norm{x-y}\norm{x}}{\norm{x-y}^2} = \frac{2\norm{x}}{\norm{x-y}}.
\end{equation*}
Thus the stability of the distance formula \eqref{eq:dist-expanded} is not always guaranteed, especially when $x\approx y$.

\subsection{Distance computation with reliability test}\label{sec:numer-stab-fallback}
Our discussion in Section~\ref{sec:numer-stab-dist-comput} shows that the commonly used distance formula~\eqref{eq:dist-expanded} can be unreliable when the two vectors are close, due to severe cancellation.
This potential instability can be particularly problematic in low-precision arithmetic, where rounding errors are larger and can accumulate more significantly.
Motivated by the error analysis, we design a numerical safeguard that compares the computed distance with an easily computable error floor, and, when necessary, switches to the distance formula~\eqref{eq:dist-direct}, which is less efficient but more reliable in the cancellation-prone regime.

\begin{corollary}
For $x, y\in\mathbb{R}^{r}$, the computed squared distance $\dexp[r] = \fl( x^T x - 2 x^T y + y^T y )$ in precision $u$ satisfies
\begin{equation}\label{eq:expanded-abs-bound}
|\dexp[r] - d(x,y)^2| \le
2\gamma_{r+2} \bigl(\norm{x}^2 + \norm{y}^2\bigr),
\end{equation}
given that $(r+2) u < 1$.
\end{corollary}

\begin{proof}
By the Cauchy--Schwarz inequality, $|x|^T|y| \le  \norm{x} \norm{y}$, which is bounded above by $\frac{1}{2} \bigl(\norm{x}^2 + \norm{y}^2\bigl)$. Therefore,
$$
x^Tx + 2|x|^T|y|+ y^Ty \le 2\bigl(\norm{x}^2 + \norm{y}^2\bigl).
$$
Substituting this expression into Theorem~\ref{thm:dist-exp} yields the desired bound.
\end{proof}

The quantity 
\begin{equation}\label{eq:error-computable}
E(x, y) := 2\gamma_{r + 2} \bigl( \norm{x}^2 + \norm{y}^2 \bigr)\ge 0
\end{equation}
in~\eqref{eq:expanded-abs-bound} easily computable and so provides a convenient way of checking the absolute error in the computed squared distance $\dexp[r]$. 
If $\dexp[r] \ge \rho E(x, y)$ for some $\rho> 1$, we have
\begin{equation*}
d(x,y)^2  \ge \dexp[r] - |\dexp[r] - d(x,y)^2|
\ge \dexp[r] - E(x,y) \ge (\rho-1)E(x,y).
\end{equation*}
It follows that
\begin{equation}\label{eq:expanded-rel-bound}
\frac{| \dexp[r] - d(x,y)^2|}{|d(x,y)^2|}
\le \frac{E(x, y)}{(\rho-1)E(x,y)} = \frac{1}{\rho-1}.
\end{equation}
The parameter $\rho>1$ can be viewed as a \emph{safety factor} that controls the \emph{relative} error in $\dexp[r]$.

Therefore, we propose to exploit the more efficient distance formula~\eqref{eq:dist-expanded} with the fallback rule
\begin{equation} \label{eq:fallback-rule}
\widehat d(x,y)^2 = \begin{cases}
\dexp[r], & \text{if}\; \dexp[r] > \rho E(x, y),\\ 
\ddir[r], & \text{otherwise},
\end{cases}
\end{equation}
where $\widehat d(x,y)^2$ presents the value we finally take as the computed squared distance between $x$ and $y$. 
When the reliability test $\dexp[r] > \rho E(x, y)$ fails (so there is no control over the relative error in $\dexp[r]$, signifying possible occurrence of significant cancellation), the fallback rule distrusts the computed distance and switches to computing the squared distance via the less efficient but numerically stable formula~\eqref{eq:dist-direct}.

\newcommand{\x}{\widetilde x}
\newcommand{\y}{\widetilde y}
\newcommand{\Dx}{\Delta x}
\newcommand{\Dy}{\Delta y}

\newcommand{\qddif}[1][]{\widehat{d}^{\ell}_{#1,\mathsf{dif}}}
\newcommand{\qdexp}[1][]{\widehat{d}^{\ell}_{#1,\mathsf{exp}}}
\newcommand{\wtrho}{\widetilde \rho}

\subsection{Effect of input quantization}\label{sec:numer-stab-fallback-mp}
Thus far, we have been considering the case where the arithmetic of computation is uniform with precision $u$. In our mixed-precision setting of interest, the original data will be converted into a lower precision so that the subsequent pairwise distance computation is performed in the low precision $\ulow > u$.
In this case we have to quantify an additional source of error from the input quantization, i.e., converting the high-precision input to the distance formulas into lower precision.

We denote $\x := \fllow(x)$ and $\y := \fllow(y)$ as the low-precision data points \emph{converted} from $x$ and $y$, respectively, so the differences $\Dx := \x - x$ and $\Dy := \y - y$ satisfy the componentwise bounds
\begin{equation}\label{eq:quantis-compon-bound}
   |\Dx| \le \ulow|x|, \quad 
   |\Dy| \le \ulow|y|.
\end{equation}
Now define
$$
    e :=(\x-x) - (\y-y) = \Dx - \Dy.
$$
Then
$$
\x - \y = (x - y) + e, \quad
|e| \le \ulow (|x| + |y|).
$$
We have $d(\x, \y)^2 - d(x, y)^2 = 2(x-y)^T e+e^T e$, and so 
$$
|d(\x, \y)^2 - d(x, y)^2 | \le  C(x,y),
$$ 
where $C(x,y)$ is the conversion error bound defined by
\begin{equation}\label{eq:quantis-convert-error}
    C(x,y) := 2\ulow |x-y|^T (|x|+|y|) + \ulow^2 (|x|+|y|)^T(|x|+|y|).
\end{equation}

Invoking Theorem~\ref{thm:dist-diff}, the computed distance (in precision $\ulow$) between the quantized data via formula~\eqref{eq:dist-direct}, which we denote as $\qddif[r]$, satisfies 
\[
|\qddif[r] - d(\x, \y)^2| \le \gamma^{\ell}_{r+2} d(\x, \y)^2.
\]
Using the triangular inequality $|\qddif[r] - d(x, y)^2|\le |\qddif[r] - d(\x, \y)^2| + |d(\x, \y)^2 - d(x, y)^2|$, the relative discrepency to the original distance can be bounded above by
\begin{equation*}
    \frac{|\qddif[r] - d(x, y)^2|}{d(x, y)^2}  \le 
    (1 + \gamma^{\ell}_{r+2})\frac{C(x,y)}{d(x, y)^2} + \gamma^{\ell}_{r+2}.
\end{equation*}
Define
\begin{equation*}    
\eta:= \frac{\ulow (\norm{x} + \norm{y})}{\norm{x-y}},
\quad x\ne y.
\end{equation*}
Since
\begin{equation*}
    C(x,y)  \le 2\ulow \norm{x-y} (\norm{x} + \norm{y})
 + \ulow^2 (\norm{x} + \norm{y})^2  = (2\eta + \eta^2)d(x, y)^2, 
\end{equation*}
we arrive at the relative error bound
\begin{equation}\label{eq:quantis-rel-bound-dif}
    \frac{|\qddif[r] - d(x, y)^2|}{d(x, y)^2}  \le 
    (2\eta + \eta^2)(1 + \gamma^{\ell}_{r+2}) + \gamma^{\ell}_{r+2}.
\end{equation}
This implies that a \emph{necessary} condition for the formula~\eqref{eq:dist-direct} to remain accurate for the original distance (so the input quantisation is harmless) is $\eta\ll 1$, namely,
\begin{equation}\label{eq:quantis-necess-cond}
    \norm{x-y} \gg \ulow (\norm{x} + \norm{y}).
\end{equation}
However, this condition will brazenly fail when the original data points $x$ and $y$ lie sufficiently away from the origin but stay close enough; for example, taking $x$ to be a unit vector and $y=x+\delta$ with $\norm{\delta} = 10 u \ll \ulow$.
The conclusion is that the distance formula~\eqref{eq:dist-direct} is not suitable for use together with input quantization.

Now consider, on the other hand, the computed distance (in precision $\ulow$) between the quantized data via formula~\eqref{eq:dist-expanded}. By Theorem~\ref{thm:dist-exp}, we have
\begin{equation*}
|\qdexp[r] - d(\x, \y)^2 | \le \gamma^{\ell}_{r+2} (\x^T \x + 2|\x|^T |\y| + \y^T \y).
\end{equation*}
Define
$$
E^{\ell}(\x,\y) := 2\gamma^{\ell}_{r+2} (\norm{\x}^2 + \norm{\y}^2)
\ge |\qdexp[r] - d(\x, \y)^2|,
$$
corresponding to the computable~\eqref{eq:error-computable} in the uniform precision.
We can obtain from~\eqref{eq:quantis-compon-bound} that $\norm{x}^2 + \norm{y}^2 \le (1+\ulow)^2 (\norm{\x}^2 + \norm{\y}^2)$, and so
\begin{equation}\label{eq:quantis-error-computable}
    E^{\ell}(\x,\y) \ge  \frac{2\gamma^{\ell}_{r+2}}{(1+\ulow)^2} (\norm{x}^2 + \norm{y}^2).
\end{equation}

Suppose 
\begin{equation}\label{eq:quantis-reliability-test}
    \qdexp[r] > \wtrho E^{\ell}(\x,\y), \quad \wtrho>2.
\end{equation}
Similarly to the derivation of~\eqref{eq:expanded-rel-bound} in Section~\ref{sec:numer-stab-fallback}, we can obtain
\[
    d(\x,\y)^2 \ge (\wtrho-1)E^{\ell}(\x,\y)
\]
and
\begin{equation}\label{eq:quantis-expanded-rel-bound}
\frac{| \qdexp[r] - d(\x,\y)^2|}{|d(\x,\y)^2|} \le \frac{1}{\wtrho-1}.
\end{equation}
Using $\norm{|x-y|}\le \norm{|x| + |y|} \le \norm{x} + \norm{y}$ and then the bound~\eqref{eq:quantis-error-computable}, we now bound the conversion error~\eqref{eq:quantis-convert-error} by
\begin{equation*}
    C(x,y)  
    \le 2(2\ulow + \ulow^2)(\norm{x}^2 + \norm{y}^2) 
      \le   \frac{(2\ulow + \ulow^2)(1+\ulow)^2}{\gamma^{\ell}_{r+2}}E^{\ell}(\x,\y).
\end{equation*}
Since $\gamma^{\ell}_{r+2} \ge (r+2)\ulow$ and $r>1$, this bound can be weakened to
\begin{equation}\label{eq:quantis-convert-error-bound}
    C(x,y) \le \frac{(2 + \ulow)(1+\ulow)^2}{r+2}E^{\ell}(\x,\y) \le E^{\ell}(\x,\y),
\end{equation}
which implies
\begin{equation}
    d(x, y)^2\ge d(\x, \y)^2 - C(x,y) \ge (\wtrho -2)E^{\ell}(\x,\y).
\end{equation}
Finally, using the triangular inequality $|\qdexp[r] - d(x, y)^2| \le |\qdexp[r] - d(\x, \y)^2| + |d(\x, \y)^2 - d(x, y)^2|$, we can quantify the relative error of the computed distance in low precision with respect to the original distance:
\begin{equation}\label{eq:quantis-rel-bound-expand}
    \frac{|\qdexp[r] - d(x, y)^2|}{d(x, y)^2}   \le \frac{E^{\ell}(\x,\y) + C(x,y)}{d(x, y)^2} 
    \le \frac{2}{\wtrho -2}.
\end{equation}

The bound~\eqref{eq:quantis-rel-bound-expand} implies that the formula~\eqref{eq:dist-expanded} can be safely used in combination with input quantization to compute the distance within a relative accuracy of $2/(\wtrho-2)$, given that the reliability test~\eqref{eq:quantis-reliability-test} holds with \emph{safety factor} $\wtrho>2$. In contrast, satisfying the same bound~\eqref{eq:quantis-error-computable} does not guarantee a small relative error for the formula~\eqref{eq:dist-direct}, since the bound does not prevent $\norm{x-y}$ from being arbitrarily small. 
Therefore, if the computed distance between $\x$ and $\y$ via~\eqref{eq:dist-expanded} fails the reliability test, then the fallback to the formula~\eqref{eq:dist-direct} should not reuse the low-precision-converted operands. It should recompute $x-y$ from the original data, or at least from a precision high enough that~\eqref{eq:quantis-necess-cond} is satisfied, so the quantisation itself does not destroy true difference.

The discussion motivates the mixed-precision fallback rule
\begin{equation} \label{eq:fallback-rule-mp}
\widehat d(\x, \y)^2 = \begin{cases}
\qdexp[r]\; \text{in precision}\; \ulow, & \text{if}\; \qdexp[r] > \wtrho E^{\ell}(\x,\y),\\ 
\ddir[r]\; \text{in precision}\; u, & \text{otherwise},
\end{cases}
\end{equation}
where $\widehat d(\x, \y)^2$ denotes the value we finally take as the computed squared distance between $x$ and $y$, with input quantization $\x = \fllow(x)$ and $\y = \fllow(y)$. 
The resulting mixed-precision distance computing scheme is given as Algorithm~\ref{alg:mp-dist}.

\newcommand{\funfont}[1]{\textsc{#1}}
\newcommand{\fundist}{\ensuremath{\funfont{dist}}}

\begin{algorithm2e}[t]
	\caption{Mixed-precision squared Euclidean distance computation.}
	\label{alg:mp-dist}
	\KwIn{$x,y\in\mathbb{R}^{r}$ in working precision $u$, safety factor $\rho>2$,
		precisions $\ulow$ and $\uhigh \ll \ulow$, where $\ulow \ll u \le \uhigh$.}
	\KwOut{$d \approx \norm{x-y}^2$, returned in working precision $u$.}
	\Function{\fundist\textup{(}$x$, $y$, $\rho$, $\ulow$, $\uhigh$\textup{)}}{
		$\x \gets \fllow(x)$, $\y \gets \fllow(y)$\;
		$d_{xx} \gets \fllow(\x^T \x)$\;
		$d_{yy}\gets \fllow(\y^T \y)$\;
		$d_{xy} \gets \fllow(\x^T \y)$\;
		$d \gets \max\{\fllow(d_{xx} - 2d_{xy} + d_{yy}),0\}$\;
		$E \gets \fllow\bigl(\rho\gamma_{r+2}^{\ell}(d_{xx}+d_{yy})\bigr)$\;
		\If{$d \le E$}{
			$d \gets \flhigh\bigl((x-y)^T(x-y)\bigr)$
			\tcp*{fallback to high precision $\uhigh$}
		}
		
		Convert $d$ into precision $u$\;
		\Return $d$\;
	}
\end{algorithm2e}

\subsection{Cluster center update}
\label{sect:center-update}
After the selection of the $k$ initial centers, the k-means method (Algorithm~\ref{alg:kmeans}) alternates between assigning each data point to its closest center and updating each center by the mean of the points assigned to it. 
For a nonempty cluster $S_i$, with $|S_i| = m_i$, the exact center update and a componentwise error bound of it are
\begin{equation}\label{eq:center}
    \mu_i=\frac{1}{m_i}\sum_{p\in S_i}p,\quad 
    \bar a_i: = \frac{1}{m_i}\sum_{p\in S_i} |p| \ge \mu_i.
\end{equation}
The update is structurally simple, but its accuracy is governed by the accuracy of the coordinate summations used to form the cluster mean. 

\begin{lemma}\label{lemma:mufloat}
Assume that the sum in~\eqref{eq:center} is computed by recursive summation in precision $u$ such that \(m_i u<1\). Then the computed center satisfies
\[
    \widehat{\mu}_i=\mu_i+\Delta\mu_i,
    \quad 
    |\Delta\mu_i| \le \gamma_{m_i} \bar a_i.
\]
Consequently,
\[
    \norm{\Delta\mu_i} \le \gamma_{m_i}
    \norm{\bar a_i} \le \gamma_{m_i}
    \Big(
        \frac{1}{m_i}\sum_{j=1}^{m_i}\norm{p_j}^2
    \Big)^{1/2}.
\]
\begin{proof}
Label the coordinates of data point $p_j$ as $p_j=(p_{j,1},\ldots,p_{j,r})^T$, $j=1,\ldots,m_i$. For a fixed coordinate \(\ell=1,\ldots,r\), define
\[
    s_\ell:=\sum_{j=1}^{m_i}p_{j,\ell},
    \quad
    \mu_{i,\ell}=\frac{s_\ell}{m_i}.
\]
The computed coordinate sum is obtained from the recursive summation
\[
    \widehat s_{1,\ell}:=p_{1,\ell},
    \quad
    \widehat s_{k,\ell}:=
    \fl(\widehat s_{k-1,\ell}+p_{k,\ell}),
    \quad k=2,\ldots,m_i.
\]
Thus \(\widehat s_\ell=\widehat s_{m_i,\ell}\). From the standard
floating-point arithmetic model~\eqref{eq:fpmodel}, there exist
\(\varepsilon_{k,\ell}\), \(k=2,\ldots,m_i\), with $|\varepsilon_{k,\ell}|\le u$, such that
\[
    \widehat s_{k,\ell} =
    (\widehat s_{k-1,\ell}+p_{k,\ell})(1+\varepsilon_{k,\ell}),
    \quad k=2,\ldots,m_i.
\]
Expanding the recursion gives
\[
\begin{aligned}
    \widehat s_{2,\ell}
    &=
    p_{1,\ell}(1+\varepsilon_{2,\ell})
    +p_{2,\ell}(1+\varepsilon_{2,\ell}), \\
    \widehat s_{3,\ell}
    &=
    p_{1,\ell}(1+\varepsilon_{2,\ell})(1+\varepsilon_{3,\ell})
    +p_{2,\ell}(1+\varepsilon_{2,\ell})(1+\varepsilon_{3,\ell}) +p_{3,\ell}(1+\varepsilon_{3,\ell}),
\end{aligned}
\]
and, in general,
\[
    \widehat s_\ell
    =
    p_{1,\ell}\prod_{k=2}^{m_i}(1+\varepsilon_{k,\ell})
    +
    \sum_{j=2}^{m_i}
    p_{j,\ell}\prod_{k=j}^{m_i}(1+\varepsilon_{k,\ell}).
\]
Hence we can write
\[
    \widehat s_\ell
    =
    \sum_{j=1}^{m_i}p_{j,\ell}(1+\xi_{j,\ell}), \quad 
    |\xi_{j,\ell}| \le
    \gamma_{m_i-1}, \quad j=1,\ldots,m_i .
\]
The computed centroid coordinate is obtained by one floating-point division,
\[
    \widehat\mu_{i,\ell}
    =
    \fl\left(\frac{\widehat s_\ell}{m_i}\right)
    =
    \frac{\widehat s_\ell}{m_i}(1+\eta_\ell),
    \quad
    |\eta_\ell|\le u.
\]
Substituting the expression for \(\widehat s_\ell\), we obtain
\[
    \widehat\mu_{i,\ell}
    =
    \frac{1}{m_i}
    \sum_{j=1}^{m_i}
    p_{j,\ell}(1+\xi_{j,\ell})(1+\eta_\ell).
\]
Since $(1+\xi_{j,\ell})(1+\eta_\ell) = 1+\theta_{j,\ell}$, and since the product now contains at most \(m_i\) rounding factors, we have $|\theta_{j,\ell}|\le \gamma_{m_i}$, $j=1,\ldots,m_i$. Therefore
\[
    \widehat\mu_{i,\ell} =
    \frac{1}{m_i}
    \sum_{j=1}^{m_i}
    p_{j,\ell}(1+\theta_{j,\ell})
    =
    \mu_{i,\ell}+\Delta\mu_{i,\ell}, \quad 
    \Delta\mu_{i,\ell} =
    \frac{1}{m_i}
    \sum_{j=1}^{m_i}p_{j,\ell}\theta_{j,\ell}.
\]
Taking absolute values for every coordinate gives the componentwise bound, and consequently, the first norm bound follows directly, and the second follows by applying the Cauchy--Schwarz inequality.
\end{proof}
\end{lemma}

The result on center perturbation can be translated into one that concerns perturbation of the cluster energy.

\begin{theorem}
\label{thm:center-energy-perturb}
Under the assumptions of Lemma~\ref{lemma:mufloat}, the energy perturbation incurred by a rounded center \(\Delta\phi_i := \phi(\widehat\mu_i,S_i)-\phi(\mu_i,S_i)\) satisfies
\begin{equation}\label{eq:energy-perturb-bound}
    \Delta\phi_i
    \le
    \gamma_{m_i}^2
    \left(
        \phi(\mu_i,S_i)+m_i\norm{\mu_i}^2
    \right).
\end{equation}

\begin{proof}
Applying Lemma~\ref{eq:converge} with
\(p=\widehat\mu_i=\mu_i+\Delta\mu_i\) gives the identity
\[
    \phi(\widehat\mu_i,S_i)-\phi(\mu_i,S_i)
    =
    m_i\norm{\widehat\mu_i-\mu_i}^2
    =
    m_i\norm{\Delta\mu_i}^2.
\]
From Lemma~\ref{lemma:mufloat}, we have
\[
    m_i\norm{\Delta\mu_i}^2 \le m_i \gamma_{m_i}^2 \norm{\bar a_i}^2
    \le \gamma_{m_i}^2
    \sum_{p\in S_i}\norm{p}^2.
\]
Since $\mu_i$ is the mean center of $S_i$, we have $\sum_{p\in S_i} (p - \mu_i) = \sum_{p\in S_i}p - m_i\mu_i = 0$, and so
\[
    \sum_{p\in S_i}\norm{p}^2
    =
    \sum_{p\in S_i}\norm{p-\mu_i}^2
    +
    m_i\norm{\mu_i}^2
    =
    \phi(\mu_i,S_i)+m_i\norm{\mu_i}^2,
\]
which proves~\eqref{eq:energy-perturb-bound}.
\end{proof}
\end{theorem}

The bound~\eqref{eq:energy-perturb-bound} tells us that the increase in the cluster energy caused by using the rounded center $\widehat\mu_i$ depends on both the within-cluster variation and the squared norm of the exact center $\mu_i$.  

As mentioned in the previous section, the convergence of the k-means algorithm relies on its property as a local improvement heuristic, that is, 
reassigning the data points to their closest center and then updating the cluster centers by the mean can only decrease the \sse\ of~\eqref{eq:sse}.
We now give a descent-preservation result for one center update on a fixed
cluster.  The local statement shows when the \emph{rounded} center update still decreases the cluster contribution to the \sse.

\begin{theorem}\label{theorem:center-update-descent}
Let \(\widehat c\) denote the previous computed center
and let 
\begin{equation}\label{eq:generic-center-error-bound}
   \widehat\mu=\mu+\Delta\mu, \quad  |\Delta\mu|\le \eta,
\end{equation}
denote the newly computed center. 
Then we have $\phi(\widehat\mu,S)<\phi(\widehat c,S)$ if 
\begin{equation}\label{eq:generic-descent-cond}
    \norm{\widehat c-\widehat\mu}^2 > 2|\widehat c-\widehat\mu|^T\eta.
\end{equation}

\begin{proof}
Using Lemma~\ref{eq:converge} twice, first with
\(p=\widehat c\) and then with \(p=\widehat\mu\), and then subtracting yields
\[
    \phi(\widehat c,S)-\phi(\widehat\mu,S) = 
    m\norm{\widehat c-\mu}^2 - m\norm{\Delta\mu}^2.
\]
Since \(\widehat c - \mu = \widehat c-\widehat\mu + \Delta\mu\), this expression becomes
\begin{equation*}
    \phi(\widehat c,S)-\phi(\widehat\mu,S)
     = m\norm{\widehat c-\widehat\mu+\Delta\mu}^2 - m\norm{\Delta\mu}^2  
     =
    m\norm{\widehat c-\widehat\mu}^2+2m (\widehat c-\widehat\mu)^T\Delta\mu.
\end{equation*}
Using the componentwise bound~\eqref{eq:generic-center-error-bound}, we get
\[
    (\widehat c-\widehat\mu)^T\Delta\mu
    \ge
    -|\widehat c-\widehat\mu|^T|\Delta\mu|
    \ge
    -|\widehat c-\widehat\mu|^T\eta.
\]
Hence
\[
    \phi(\widehat c,S)-\phi(\widehat\mu,S) \ge
    m\left(
        \norm{\widehat c-\widehat\mu}^2-2|\widehat c-\widehat\mu|^T\eta
    \right).
\]
Thus,~\eqref{eq:generic-descent-cond} is a sufficient condition for $\phi(\widehat c,S)-\phi(\widehat\mu,S)>0$.
\end{proof}
\end{theorem}

If the center is computed by the recursive summation of Lemma~\ref{lemma:mufloat}, such that $|\widehat\mu - \mu| \le \gamma_{m} \bar a$ for some $\bar a$, then Theorem~\ref{theorem:center-update-descent} implies
\begin{equation*}
    \norm{\widehat c-\widehat\mu}^2 >
    2\gamma_m |\widehat c-\widehat\mu|^T\bar a.
\end{equation*}
Furthermore, if $|\widehat c-\widehat\mu|^T\bar a \ne 0$, then a stronger sufficient condition is $ mu/ (1-mu) < \norm{\widehat c-\widehat\mu}^2/ (2|\widehat c-\widehat\mu|^T\bar a)$, or equivalently by
$$    
    u <
    \frac{\norm{\widehat c-\widehat\mu}^2}
    {m(\norm{\widehat c-\widehat\mu}^2+2|\widehat c-\widehat\mu|^T\bar a)}.
$$

Theorem~\ref{theorem:center-update-descent} is a statement for a fixed cluster.  Summing the same argument over all clusters gives a corresponding sufficient condition for descent of the total \sse\ after one center-update step, given as Corollary~\ref{cor:total-sse-decrease}. 

\begin{corollary}
\label{cor:total-sse-decrease}
Let \(S_1,\ldots,S_k\) be the current nonempty clusters, and let
\(\widehat c_i\) and \(\widehat\mu_i\) denote the previous and newly computed
centers for \(S_i\), respectively. Under the assumptions of Lemma~\ref{lemma:mufloat},
\begin{equation*}
    \sum_{i=1}^k
    \left[
        \phi(\widehat c_i,S_i)-\phi(\widehat\mu_i,S_i)
    \right] 
    \ge
    \sum_{i=1}^k
    m_i
    \left(
        \norm{\widehat c_i-\widehat\mu_i}^2
        -
        2\gamma_{m_i}|\widehat c_i-\widehat\mu_i|^T\bar a_i
    \right).
\end{equation*}
\end{corollary}

The vector $|\widehat c-\widehat\mu|$ measures the center movement, which is expected to decrease as the iterations proceed. Therefore, the update may require either higher precision or a more accurate summation scheme to preserve the descent property at the level of the computed centers.
For the latter, one possibility is to employ pairwise summation~\cite[sect.~4.1]{high:ASNA2}, and then the factor \(\gamma_m\) in the bounds above can be replaced by a factor depending on the summation depth, typically of order \(\gamma_{\lceil \log_2 m\rceil}\). 

Finally, we note that the analysis is not independent of preprocessing, which changes the data distribution. Indeed, the error bounds for the center update depend on quantities such as $\bar a_i = \sum_{p\in S_i}|p|/m_i$ and $\sum_{p\in S_i}\norm{p}^2$, which may change when the data are rescaled, centered, or normalized.

\section{Mixed-precision k-means}\label{sect:mp-kmeans}

\begin{algorithm2e}[!t]
	\caption{Mixed-precision k-means.}
	\label{alg:mp-kmeans}
	\KwIn{Data set $P=[p_i]_{i=1}^{n}$ in working precision $u$, safety factor
		$\rho>2$, precisions $\ulow$ and $\uhigh$, where $\uhigh \ll u \le \ulow$, the number of clusters $k$,
		convergence tolerance $\tau>0$, and maximum number of iterations $t_{\max}$.}
	\KwOut{Centers $\{c_1,\ldots,c_k\}$ stored in precision $u$ and cluster assignments.}
		Normalize dataset $P$ in precision $u$\;
		Select initial centers $\{c_1^{(0)},\ldots,c_k^{(0)}\}$\;
		\For{$t \gets 1,\ldots,t_{\max}$}{
			\ForEach{$p_i \in P$}{
				\ForEach{$j=1,\ldots,k$}{
					$\widehat d_{ij} \gets \fundist\textup{(}p_i,c_j^{(t-1)},\rho,\ulow,\uhigh\textup{)}$
					\tcp*{Call Algorithm~\ref{alg:mp-dist}}
				}
				Assign $p_i$ to cluster $S_j^{(t)}$, where $j=\argmin_{s} \widehat d_{is}$\;
			}
			\ForEach{$j=1,\ldots,k$}{
				$\widetilde c_j^{(t)}	\gets \frac{1}{|S_j^{(t)}|} \sum_{p_i\in S_j^{(t)}} p_i$ in precision $u$\;
			}
			\If{$\max_j \norm{c_j^{(t)}-c_j^{(t-1)}} < \tau$}{
				\textbf{break}\;
			}
		}
		Recompute all distances by Algorithm~\ref{alg:mp-dist} and assign each
		$p_i$ to closest center\;
		\Return final centers $\{c_1,\ldots,c_k\}$ and cluster assignments
\end{algorithm2e}

Based on our discussion in the previous section, we present our mixed-precision k-means algorithm in Algorithm~\ref{alg:mp-kmeans}. 

The distance computations used for cluster assignment are performed in mixed precision using Algorithm~\ref{alg:mp-dist}.  In contrast, the centroid updates are carried out in the working precision throughout.  This choice is motivated by the analysis in Section~\ref{sect:center-update}, which shows that sufficient precision in the centroid update is important for preserving convergence, especially in the later stages of the k-means iteration.  

Reduced-precision center updates may be viable early in the iteration, but we do not pursue this option here because a reliable switching criterion is not yet available.

\section{Numerical experiments}\label{sec:numer-exper}

All tests were performed using four independent runs. The first run was used as a warm-up and excluded from timing statistics; we report averages over the remaining three runs.  
The compute environment is a single compute node\footnote{https://front.convergence.lip6.fr/} equipped with a Dell PowerEdge R750xa server, featuring Intel Xeon Gold 6330 CPUs (56 cores/112 threads at 2.00 GHz), 1 TB of RAM, and four NVIDIA A100 80GB PCIe GPUs. We utilized a single GPU for computing. The software environment is based on CUDA 12.8 and \texttt{PyTorch} 2.11.0. Before running the algorithms, we standardize each feature using z-score normalization, i.e., by subtracting its mean and dividing by its standard deviation~\cite{han2011datamining}.

\subsection{Euclidean distance computation}\label{sect:numer-experi-dist-comput}

First, we compare the Euclidean distances computed by~\eqref{eq:dist-expanded} in a working precision of BF16, FP16, FP32, and FP64 (see Table~\ref{table:float-arith-parameter}), respectively. The results for direct formula~\eqref{eq:dist-direct} in uniform FP32 and FP64 (without fallback) are also presented for comparison, and the latter is used as the reference value for accuracy.
For the uniform FP32 and FP64 kernels (without fallback) by~\eqref{eq:dist-expanded}, we use the native pairwise distance interface \texttt{cdist} in \texttt{PyTorch}. \texttt{PyTorch}'s \texttt{cdist} operator is implemented through ATen, \texttt{PyTorch}'s C++ tensor library, and dispatches to the corresponding CUDA kernel when the input tensors reside on a CUDA device. 
The uniform BF16 and FP16 kernels (without fallback) are based on our own CUDA implementations. 

We test the fallback rules in these four uniform-precision environments described by~\eqref{eq:fallback-rule}, and in the input-quantization setting described by~\eqref{eq:fallback-rule-mp}, with FP32 or FP64 used as the working precision.
For the latter, we report in the experiments the fallback ratio
\begin{equation}\label{eq:eta}
	\eta = \frac{\text{Number of computations that fell back to high precision}}{\text{Total number of computations}} 
\end{equation}
to gauge the proportion of high-precision computations.
All kernels with fallback are based on our own CUDA implementations, including the input-quantization variants of Algorithm~\ref{alg:mp-dist}.
Unless otherwise specified, the safety factors in the fallback rules \eqref{eq:fallback-rule} and \eqref{eq:fallback-rule-mp} are set to $\rho = \wtrho = 5$.

\subsubsection{Distance on far and near points}

\begin{table}
	\centering
	\caption{Performance summary for far and near random points with 25 million distance entries.}
	\label{tab:per_random_near}
	\resizebox{1\linewidth}{!}{%
		\begin{tabular}{c l r r l r r r l r}
			\toprule
			\multirow{2}{*}{\makecell{Working\\ precision}} &
			\multirow{2}{*}{Method} &
			\multicolumn{4}{c}{Far points} &
			\multicolumn{4}{c}{Near points} \\
			\cmidrule(lr){3-6}\cmidrule(lr){7-10}
			&
			& Time (ms) & Speedup & Max. rel. error & $\eta$
			& Time (ms) & Speedup & Max. rel. error & $\eta$ \\
			\midrule
			FP16 & FP16
			& 0.53 & 402.4$\times$ & $1.218 \times 10^{-3}$ & -
			& 0.50 & 428.7$\times$ & $\mathbf{1.908 \times 10^{9}}$ & - \\
			BF16 & BF16
			& 0.58 & 369.8$\times$ & $9.106 \times 10^{-3}$ & -
			& 0.53 & 404.8$\times$ & $\mathbf{3.480 \times 10^{9}}$ & - \\
			FP32 & FP32 Expanded
			& 1.83 & 116.9$\times$ & $4.942 \times 10^{-7}$ & -
			& 1.80 & 119.9$\times$ & $\mathbf{1.692 \times 10^{6}}$ & - \\
			FP32 & FP32 Direct
			& 30.68 & 7.0$\times$ & $9.599 \times 10^{-7}$ & -
			& 30.67 & 7.0$\times$ & $9.328 \times 10^{-7}$ & - \\
			FP64 & FP64 Expanded
			& 2.74 & 77.9$\times$ & $5.960 \times 10^{-8}$ & -
			& 2.71 & 79.6$\times$ & $2.709 \times 10^{-3}$ & - \\
			FP64 & FP64 Direct
			& 57.76 & 3.7$\times$ & - & -
			& 57.76 & 3.7$\times$ & - & - \\
			\midrule
			FP16 & FP16$\rightarrow$FP16
			& 0.67 & 318.0$\times$ & $5.672 \times 10^{-3}$ & 0.00\%
			& 0.68 & 318.5$\times$ & $\mathbf{3.716 \times 10^{4}}$ & 0.02\% \\
			BF16 & BF16$\rightarrow$BF16
			& 33.11 & 6.4$\times$ & $6.540 \times 10^{-2}$ & 100.00\%
			& 33.10 & 6.5$\times$ & $\mathbf{2.349 \times 10^{6}}$ & 100.00\% \\
			FP32 & FP32$\rightarrow$FP32
			& 1.47 & 145.4$\times$ & $8.554 \times 10^{-7}$ & 0.00\%
			& 1.47 & 146.7$\times$ & $8.938 \times 10^{-7}$ & 0.02\% \\
			\midrule
			FP32 & FP16$\rightarrow$FP32
			& 0.67 & 317.1$\times$ & $8.587 \times 10^{-4}$ & 0.00\%
			& 0.67 & 321.8$\times$ & $8.524 \times 10^{-4}$ & 0.02\% \\
			FP64 & FP16$\rightarrow$FP64
			& 0.68 & 315.7$\times$ & $8.587 \times 10^{-4}$ & 0.00\%
			& 0.68 & 318.9$\times$ & $8.524 \times 10^{-4}$ & 0.02\% \\
			FP32 & BF16$\rightarrow$FP32
			& 33.12 & 6.4$\times$ & $9.599 \times 10^{-7}$ & 100.00\%
			& 33.13 & 6.5$\times$ & $9.328 \times 10^{-7}$ & 100.00\% \\
			FP64 & BF16$\rightarrow$FP64
			& 33.04 & 6.5$\times$ & $5.960 \times 10^{-8}$ & 100.00\%
			& 32.99 & 6.5$\times$ & $5.960 \times 10^{-8}$ & 100.00\% \\
			FP64 & FP32$\rightarrow$FP64
			& 1.49 & 143.7$\times$ & $8.554 \times 10^{-7}$ & 0.00\%
			& 1.49 & 145.0$\times$ & $8.938 \times 10^{-7}$ & 0.02\% \\
			\midrule
			FP64 & \texttt{scikit-learn}
			& 213.58 & 1.0$\times$ & $1.825 \times 10^{-15}$ & -
			& 215.39 & 1.0$\times$ & $3.675 \times 10^{-3}$ & - \\
			\bottomrule
		\end{tabular}
	}
\end{table}

The first experiment is a comprehensive benchmarking on the performance and accuracy of various precision strategies for pairwise Euclidean distance computation on large-scale feature matrices.
We use two feature matrices of size $128\times 5,000$, yielding 25 million distance entries. The dimension 128 of data points is commonly encountered in modern machine learning and data science, particularly in embedding-based tasks such as face recognition, image retrieval, and contrastive learning; for example, Google’s FaceNet represents faces using 128-dimensional embeddings~\cite{Schroff_2015_CVPR}.

To assess robustness under different numerical conditions, we consider two scenarios. The random-points scenario represents typical machine-learning workloads with independently distributed feature vectors, whereas the near-points scenario models a more challenging setting in which many point pairs are nearly identical, differing only by noise of order $10^{-6}$. The latter is relevant to duplicate detection, fine-grained similarity search, and clustering with tight clusters, where cancellation in floating-point subtraction can significantly degrade accuracy~\cite{chgu24a, guya21, rak23}.

The results are reported in Table~\ref{tab:per_random_near}.
For each method, we report the execution time, speedup relative to the CPU baseline by \texttt{scikit-learn}\footnote{\url{https://scikit-learn.org/stable/modules/generated/sklearn.metrics.pairwise_distances.html}}, maximum relative error, and fallback percentage whenever it is applicable. 
In the first section, the four uniform-precision methods without fallback are delivering excellent accuracy to the respective unit roundoff when the data points are randomly distributed, but they become unstable in the near-points setting and produce maximum relative error much greater than one. This phenomenon is consistent with our analyses in Section~\ref{sec:numer-stab-dist-comput}.
The middle section reports three uniform-precision methods with fallback (to~\eqref{eq:dist-direct} in the same precision). We see that the half precisions are not accurate enough for the fallback mechanism to work in the near-point setting, and that FP32 with fallback works in both random-points and near-points scenarios, albeit with deteriorate accuracy in the latter. In the last section, we observe that the mixed-precision methods with fallback  offer satisfying accuracy in both settings.
They automatically detect pairwise distance computation in which low-precision arithmetic may be unreliable and selectively apply high-precision correction, curing instability without sacrificing overall performance. 

In terms of runtime, our GPU implementations achieve substantial speedups over the CPU implementation in \texttt{scikit-learn}, reaching up to several hundredfold acceleration when low-precision computations are exploited to their full potential. 
In particular, our mixed-precision FP16$\rightarrow$FP32 and FP16$\rightarrow$FP64 kernels are about $2.7\times$ faster than the \texttt{cdist} routine of \texttt{PyTorch} (FP32 Expanded in the table) and are more stable in the near-points scenario.
A caveat is that these speedups may come at the cost of reduced accuracy, with errors typically limited to the level of the unit roundoff of the low-precision format.
Finally, we note that the fallback mechanism has to be used sparingly for efficiency. In the cases of BF16$\rightarrow$BF16, BF16$\rightarrow$FP32, and BF16$\rightarrow$FP64, the fallback criterion was triggered for all pairwise distance computations, so the direct formula~\eqref{eq:dist-direct} was used throughout.
This formula consists primarily of level-$1$ BLAS operations and has low arithmetic intensity; it is therefore largely memory-bound. Consequently, replacing FP32 or FP64 arithmetic by BF16 arithmetic does not necessarily improve the runtime. If the input data are stored in FP32 and converted to BF16 inside the kernel, the global-memory traffic is essentially unchanged. Even with BF16 storage, the runtime may still be dominated by memory access and data-movement costs rather than by floating-point throughput.

\subsubsection{Sensitivity tests on fallback}

\begin{table}
	\centering
	\caption{Near-point stress test for mixed near-far data with noise level $\varepsilon = 10^{-4}$:  A hybrid testing data where various fraction of points in $C = P + \varepsilon \mathcal{N}(0, I)$  are \emph{near-duplicates} of points in $P$, with noise $\varepsilon$, while the remaining points are sampled randomly, and $\eta$ denotes the fallback rate~\eqref{eq:eta}.}
	\label{tab:dist-streess-test}
	\resizebox{0.85\linewidth}{!}{
		\begin{tabular}{r c r l r l l }
			\toprule
			Fraction & Method & Time (s) & Max error &  $\eta$ & Near max error & Far max error \\
			\midrule
			0\% & BF16 & 0.54 & $9.26 \times 10^{-3}$ & - & $0$ & $9.26 \times 10^{-3}$ \\
			0\% & FP16 & 0.51 & $1.15 \times 10^{-3}$ & - & $0$ & $1.15 \times 10^{-3}$ \\
			0\% & FP32  & 1.81 & $4.42 \times 10^{-7}$ & - & $0$ & $4.42 \times 10^{-7}$ \\
			0\% & FP64 & 2.71 & - & - & $0$ & - \\
			0\% & BF16$\rightarrow$BF16 & 33.12 & $6.92 \times 10^{-2}$ & 100\% & $0$ & $6.92 \times 10^{-2}$ \\
			0\% & FP16$\rightarrow$FP16 & 0.68 & $3.99 \times 10^{-3}$ & 0\% & $0$ & $3.99 \times 10^{-3}$ \\
			0\% & FP32$\rightarrow$FP32 & 1.47 & $7.70 \times 10^{-7}$ & 0\% & $0$ & $7.70 \times 10^{-7}$ \\
			0\% & BF16$\rightarrow$FP32 & 33.09 & $9.73 \times 10^{-7}$ & 100\% & $0$ & $9.73 \times 10^{-7}$ \\
			0\% & BF16$\rightarrow$FP64 & 32.99 & $5.96 \times 10^{-8}$ & 100\% & $0$ & $5.96 \times 10^{-8}$ \\
			0\% & FP16$\rightarrow$FP32 & 0.68 & $8.41 \times 10^{-4}$ & 0\% & $0$ & $8.41 \times 10^{-4}$ \\
			0\% & FP16$\rightarrow$FP64 & 0.67 & $8.41 \times 10^{-4}$ & 0\% & $0$ & $8.41 \times 10^{-4}$ \\
			0\% & FP32$\rightarrow$FP64 & 1.49 & $7.70 \times 10^{-7}$ & 0\% & $0$ & $7.70 \times 10^{-7}$ \\
			
			\midrule
			10\% & BF16 & 0.54 & $\mathbf{8.89 \times 10^{5}}$ & - & $8.89 \times 10^{5}$ & $9.14 \times 10^{-3}$ \\
			10\% & FP16 & 0.50 & $\mathbf{1.11 \times 10^{5}}$ & - & $1.11 \times 10^{5}$ & $1.17 \times 10^{-3}$ \\
			10\% & FP32  & 1.80 & $\mathbf{1.17 \times 10^{2}}$ & - & $1.17 \times 10^{2}$ & $4.58 \times 10^{-7}$ \\
			10\% & FP64 & 2.71 & - & - & - & - \\
			10\% & BF16$\rightarrow$BF16 & 33.10 & $\mathbf{2.55 \times 10^{2}}$ & 100\% & $2.55 \times 10^{2}$ & $6.85 \times 10^{-2}$ \\
			10\% & FP16$\rightarrow$FP16 & 0.67 & $\mathbf{1.03 \times 10^{1}}$ & 0\% & $1.03 \times 10^{1}$ & $4.74 \times 10^{-3}$ \\
			10\% & FP32$\rightarrow$FP32 & 1.47 & $8.44 \times 10^{-7}$ & 0\% & $6.02 \times 10^{-7}$ & $8.44 \times 10^{-7}$ \\
			10\% & BF16$\rightarrow$FP32 & 33.10 & $9.45 \times 10^{-7}$ & 100\% & $6.02 \times 10^{-7}$ & $9.45 \times 10^{-7}$ \\
			10\% & BF16$\rightarrow$FP64 & 33.00 & $5.96 \times 10^{-8}$ & 100\% & $5.95 \times 10^{-8}$ & $5.96 \times 10^{-8}$ \\
			10\% & FP16$\rightarrow$FP32 & 0.67 & $8.52 \times 10^{-4}$ & 0\% & $6.02 \times 10^{-7}$ & $8.52 \times 10^{-4}$ \\
			10\% & FP16$\rightarrow$FP64 & 0.67 & $8.52 \times 10^{-4}$ & 0\% & $5.95 \times 10^{-8}$ & $8.52 \times 10^{-4}$ \\
			10\% & FP32$\rightarrow$FP64 & 1.49 & $8.44 \times 10^{-7}$ & 0\% & $5.95 \times 10^{-8}$ & $8.44 \times 10^{-7}$ \\
			
			\midrule
			50\% & BF16 & 0.54 & $\mathbf{7.88 \times 10^{5}}$ & - & $7.88 \times 10^{5}$ & $1.10 \times 10^{-2}$ \\
			50\% & FP16 & 0.50 & $\mathbf{1.10 \times 10^{5}}$ & - & $1.10 \times 10^{5}$ & $1.12 \times 10^{-3}$ \\
			50\% & FP32 & 1.81 & $\mathbf{1.68 \times 10^{2}}$ & - & $1.68 \times 10^{2}$ & $4.45 \times 10^{-7}$ \\
			50\% & FP64  & 2.70 & - & - & - & - \\
			50\% & BF16$\rightarrow$BF16 & 33.10 & $\mathbf{4.92 \times 10^{2}}$ & 100\% & $4.92 \times 10^{2}$ & $6.49 \times 10^{-2}$ \\
			50\% & FP16$\rightarrow$FP16 & 0.68 & $\mathbf{1.58 \times 10^{1}}$ & 0.01\% & $1.58 \times 10^{1}$ & $4.57 \times 10^{-3}$ \\
			50\% & FP32$\rightarrow$FP32 & 1.47 & $7.58 \times 10^{-7}$ & 0.01\% & $6.79 \times 10^{-7}$ & $7.58 \times 10^{-7}$ \\
			50\% & BF16$\rightarrow$FP32 & 33.05 & $1.00 \times 10^{-6}$ & 100\% & $6.79 \times 10^{-7}$ & $1.00 \times 10^{-6}$ \\
			50\% & BF16$\rightarrow$FP64 & 32.91 & $5.96 \times 10^{-8}$ & 100\% & $5.83 \times 10^{-8}$ & $5.96 \times 10^{-8}$ \\
			50\% & FP16$\rightarrow$FP32 & 0.67 & $8.50 \times 10^{-4}$ & 0.01\% & $6.79 \times 10^{-7}$ & $8.50 \times 10^{-4}$ \\
			50\% & FP16$\rightarrow$FP64 & 0.67 & $8.50 \times 10^{-4}$ & 0.01\% & $5.83 \times 10^{-8}$ & $8.50 \times 10^{-4}$ \\
			50\% & FP32$\rightarrow$FP64 & 1.49 & $7.58 \times 10^{-7}$ & 0.01\% & $5.83 \times 10^{-8}$ & $7.58  \times 10^{-7}$ \\
			
			\midrule
			100\% & BF16 & 0.54 & $\mathbf{9.19 \times 10^{5}}$ & - & $9.19 \times 10^{5}$ & $9.29 \times 10^{-3}$ \\
			100\% & FP16 & 0.51 & $\mathbf{1.27 \times 10^{5}}$ & - & $1.27 \times 10^{5}$ & $1.13 \times 10^{-3}$ \\
			100\% & FP32 & 1.81 & $\mathbf{1.37 \times 10^{2}}$ & - & $1.37 \times 10^{2}$ & $4.73 \times 10^{-7}$ \\
			100\% & FP64 & 2.71 & - & - & - & - \\
			100\% & BF16$\rightarrow$BF16 & 33.15 & $\mathbf{4.31 \times 10^{2}}$ & 100\% & $4.31 \times 10^{2}$ & $6.49 \times 10^{-2}$ \\
			100\% & FP16$\rightarrow$FP16 & 0.68 & $\mathbf{1.43 \times 10^{1}}$ & 0.02\% & $1.43 \times 10^{1}$ & $3.89 \times 10^{-3}$ \\
			100\% & FP32$\rightarrow$FP32 & 1.47 & $8.22 \times 10^{-7}$ & 0.02\% & $8.05 \times 10^{-7}$ & $8.22 \times 10^{-7}$ \\
			100\% & BF16$\rightarrow$FP32 & 33.13 & $9.72 \times 10^{-7}$ & 100\% & $8.05 \times 10^{-7}$ & $9.72 \times 10^{-7}$ \\
			100\% & BF16$\rightarrow$FP64 & 32.98 & $5.96 \times 10^{-8}$ & 100\% & $5.74 \times 10^{-8}$ & $5.96 \times 10^{-8}$ \\
			100\% & FP16$\rightarrow$FP32 & 0.68 & $8.39 \times 10^{-4}$ & 0.02\% & $8.05 \times 10^{-7}$ & $8.39 \times 10^{-4}$ \\
			100\% & FP16$\rightarrow$FP64 & 0.67 & $8.39 \times 10^{-4}$ & 0.02\% & $5.74 \times 10^{-8}$ & $8.39 \times 10^{-4}$ \\
			100\% & FP32$\rightarrow$FP64 & 1.49 & $8.22 \times 10^{-7}$ & 0.02\% & $5.74 \times 10^{-8}$ & $8.22  \times 10^{-7}$ \\
			\bottomrule
		\end{tabular}
	}
\end{table}

We next study the sensitivity of the fallback mechanism in mixed near-far regimes. 
The tests are performed again on real feature matrices $C$ and $P$ of size $128\times 5,000$.  We construct hybrid datasets where various fraction of points in $C = P + \varepsilon \mathcal{N}(0, I)$  are \emph{near-duplicates} of points in $P$, with noise $\varepsilon$, while the remaining points are sampled randomly. This setting tests robustness under realistic data distributions containing both clustered structures and outliers, as commonly arise in iterative clustering algorithms.

We measure the GPU kernel execution time, the maximum relative error with
respect to an FP64 reference, the fallback rate $\eta$, and separate maximum relative
errors for near (distance less than $10^{-3}$) and far pairs. We tested different noise levels $\varepsilon \in \{10^{-2}, 10^{-3}, \ldots, 10^{-6}\}$ to simulate varying degrees of point proximity and found similar results in these cases. Table~\ref{tab:dist-streess-test} reports the results with noise $\varepsilon = 10^{-4}$. 
We first observe that uniform low-precision
methods are highly sensitive to the presence of near-point pairs.  When only
$10\%$ of the data are near points, the maximum relative errors of uniform FP16
and BF16 explode to much greater than one, respectively.  Uniform FP32
also becomes unreliable for these near pairs, with errors above one.  The
far-point errors remain small, indicating that the instability is caused mainly
by cancellation in the nearly coincident distance computations. This behavior causes issues for k-means clustering, where each iteration produces a mixture of converged (near) and unconverged (far) centroid-point pairs.

The mixed-precision methods with fallback substantially improve robustness.
For FP16$\rightarrow$FP32 and FP16$\rightarrow$FP64, the fallback rate remains
very small, at most $0.02\%$, while the near-point errors stay at the level of
about $10^{-7}$ to $10^{-6}$.  The overall maximum error is then dominated by
the far-point FP16 computation and remains around $10^{-3}$.  In contrast,
BF16-based fallback is triggered for essentially all entries in this experiment.
This gives stable accuracy when the correction precision is FP32 or FP64, but at
a much higher runtime cost. 
The execution times follow a trend similar as that in the previous experiments. The mixed-precision methods FP16$\rightarrow$FP32 and FP16$\rightarrow$FP64 consistently achieve approximately $2.7\times$ speedup than the \texttt{cdist} routine of \texttt{PyTorch}.

These results suggest that the low-precision distance computation with fallback mechanism remains effective even in the presence of numerically delicate distance entries. In such cases, the FP16$\rightarrow$FP32 and FP16$\rightarrow$FP64 kernels preserve most of the
low-precision throughput while correcting the unstable near-point computations.
For clustering applications, this is particularly relevant near convergence,
where many centroid--point distances may become small and the expanded distance
formula becomes more vulnerable to cancellation.

\paragraph{Implications for clustering algorithms} These results provide practical guidance for precision selection in k-means and related algorithms.  In early iterations, when centroid-point distances are typically large, uniform FP16 or BF16 computations may be sufficient and can provide maximum throughput.  Near convergence, however, shrinking centroid-point distances make the expanded distance formula more susceptible to cancellation.  In this regime, mixed-precision methods with fallback, such as FP16$\rightarrow$FP32 or FP16$\rightarrow$FP64, are preferable to prevent divergence. The adaptive fallback mechanism makes the performance-accuracy tradeoff
data-dependent: high-precision correction is applied only when the low-precision
computation is deemed unreliable. Thus, the computational cost is governed by the numerical difficulty of the input rather than fixed a priori. This feature is particularly useful in clustering applications, where the distribution of centroid-point distances changes during the iterations.

\subsection{K-means clustering}
Our k-means implementation here follows a hybrid CPU-GPU architecture where the lightweight main loop and convergence checking run on CPU, while all computationally expensive operations are GPU-accelerated---custom CUDA kernels (same as above) to handle distance computation with multi-precision support and fallback~\eqref{eq:fallback-rule-mp}, and the center updates are parallelized across all $r \times k$ dimensions simultaneously with automatic empty cluster reinitialization.  We test in working precision of FP32 the five mixed-precision distance kernels BF16$\rightarrow$FP32, BF16$\rightarrow$FP64, FP16$\rightarrow$FP32, FP16$\rightarrow$FP64, and FP32$\rightarrow$FP64 with adaptive fallback mechanism, which achieved the best performance in our experiment of Section~\ref{sect:numer-experi-dist-comput}. 
For comparison, we also include standard uniform-precision kernels using FP16, BF16, FP32, and FP64 (without fallback).

The quality of clustering is evaluated in terms of \sse\ in~\eqref{eq:sse}, Adjusted Rand Index (ARI), and Adjusted Mutual Information (AMI)~\cite{JMLR:v11:vinh10a}.
The convergence tolerance of Algorithm~\ref{alg:mp-kmeans} is set to $\tau = 10^{-8}$ (unit roundoff level of FP32) with maximum number of iterations $t_{\max} = 300$.

\subsubsection{Clustering on Gaussian Blobs}

We evaluate the performance of distinct distance kernels with different precision
configurations in k-means clustering on synthetic Gaussian blob datasets. Tables~\ref{tab:km-gaussblob1}, \ref{tab:km-gaussblob2}, \ref{tab:km-gaussblob3}, and
\ref{tab:km-gaussblob4} report end-to-end k-means performance with $n=100{,}000$ samples, feature dimensions $d\in\{10,128\}$, and $k\in\{100,500\}$ clusters.  
The reported speedup is measured relative to our FP64 implementation in CUDA, so values below one indicate a slowdown relative to the FP64 baseline.
Recall that the experiment results are the average of four independent runs, so the iteration count can be non-integer values.

\begin{table}
	\centering
	\caption{
		Performance of k-means on Gaussian blob data with
		$n=100{,}000$, feature dimension $d=10$, and $k=100$ clusters.
		The table compares different distance kernels for $\rho\in\{5,10,50\}$. 
	}\label{tab:km-gaussblob1}
	\renewcommand{\arraystretch}{1.12}\resizebox{0.72\linewidth}{!}{%
		\begin{tabular}{l r r r r r r r}
			\toprule
			Method & $\rho$ & Time (s) & Speedup & Iter & AMI & ARI & SSE \\
			\midrule
			BF16 & - & 1.98 & 0.2 & 300.0 & 0.9621 & 0.8067 & 73,578.38 \\
			FP16 & - & 1.97 & 0.2 & 300.0 & 0.9610 & 0.8014 & 73,046.56 \\
			FP32 & - & 0.45 & 1.1 & 66.0 & 0.9608 & 0.8005 & 73,050.01 \\
			FP64 & - & 0.47 & 1.0 & 66.0 & 0.9608 & 0.8005 & 73,050.00 \\
			\midrule
			\multirow{3}{*}{BF16$\rightarrow$FP32}
			& 5  & 0.47 & 1.0 & 66.0 & 0.9608 & 0.8005 & 73,050.00 \\
			& 10 & 0.47 & 1.0 & 66.0 & 0.9608 & 0.8005 & 73,050.00 \\
			& 50 & 0.47 & 1.0 & 66.0 & 0.9608 & 0.8005 & 73,050.00 \\
			\cdashline{1-8} 
			\multirow{3}{*}{FP16$\rightarrow$FP32}
			& 5  & 0.44 & 1.1 & 64.8 & 0.9610 & 0.8012 & 73,052.14 \\
			& 10 & 0.44 & 1.1 & 66.0 & 0.9610 & 0.8012 & 73,053.52 \\
			& 50 & 0.46 & 1.0 & 66.0 & 0.9608 & 0.8005 & 73,050.00 \\
			\midrule
			\multirow{3}{*}{BF16$\rightarrow$FP64}
			& 5  & 0.25 & 1.9 & 42.5 & 0.9609 & 0.8010 & 73,057.62 \\
			& 10 & 0.25 & 1.9 & 42.5 & 0.9609 & 0.8010 & 73,057.62 \\
			& 50 & 0.25 & 1.9 & 42.5 & 0.9609 & 0.8010 & 73,057.62 \\
			\cdashline{1-8} 
			\multirow{3}{*}{FP16$\rightarrow$FP64}
			& 5  & 0.36 & 1.3 & 55.0 & 0.9610 & 0.8012 & 73,052.51 \\
			& 10 & 0.37 & 1.3 & 54.0 & 0.9610 & 0.8012 & 73,053.78 \\
			& 50 & 0.35 & 1.4 & 50.8 & 0.9608 & 0.8005 & 73,050.73 \\
			\cdashline{1-8} 
			\multirow{3}{*}{FP32$\rightarrow$FP64}
			& 5  & 0.44 & 1.1 & 66.0 & 0.9608 & 0.8005 & 73,050.00 \\
			& 10 & 0.44 & 1.1 & 66.0 & 0.9608 & 0.8005 & 73,050.00 \\
			& 50 & 0.44 & 1.1 & 66.0 & 0.9608 & 0.8005 & 73,050.00 \\
			\bottomrule
		\end{tabular}
	}%
\end{table}

\begin{table}
	\centering
	\caption{
		Performance of k-means on Gaussian blob data with
		$n=100{,}000$, feature dimension $d=10$, and $k=500$ clusters.
		The table compares different distance kernels for $\rho\in\{5,10,50\}$. 
	}\label{tab:km-gaussblob2}
	\renewcommand{\arraystretch}{1.12}\resizebox{0.72\linewidth}{!}{%
		\begin{tabular}{l r r r r r r r}
			\toprule
			Method & $\rho$ & Time (s) & Speedup & Iter & AMI & ARI & SSE \\
			\midrule
			BF16 & - & 4.03 & 0.1 & 300.0 & 0.9663 & 0.8101 & 60,338.57 \\
			FP16 & - & 4.01 & 0.1 & 300.0 & 0.9636 & 0.7970 & 61,623.71 \\
			FP32 & - & 0.34 & 1.1 & 25.0 & 0.9636 & 0.7969 & 61,617.53 \\
			FP64 & - & 0.38 & 1.0 & 25.0 & 0.9636 & 0.7969 & 61,617.52 \\
			\midrule
			\multirow{3}{*}{BF16$\rightarrow$FP32}
			& 5  & 0.37 & 1.0 & 25.0 & 0.9636 & 0.7969 & 61,617.52 \\
			& 10 & 0.39 & 1.0 & 25.0 & 0.9636 & 0.7969 & 61,617.52 \\
			& 50 & 0.38 & 1.0 & 25.0 & 0.9636 & 0.7969 & 61,617.52 \\
			\cdashline{1-8}
			\multirow{3}{*}{FP16$\rightarrow$FP32}
			& 5  & 0.34 & 1.1 & 27.5 & 0.9636 & 0.7969 & 61,596.62 \\
			& 10 & 0.34 & 1.1 & 27.8 & 0.9636 & 0.7969 & 61,590.41 \\
			& 50 & 0.36 & 1.1 & 25.0 & 0.9636 & 0.7969 & 61,617.52 \\
			\midrule
			\multirow{3}{*}{BF16$\rightarrow$FP64}
			& 5  & 0.43 & 0.9 & 27.2 & 0.9632 & 0.7945 & 62,043.20 \\
			& 10 & 0.45 & 0.9 & 27.2 & 0.9632 & 0.7945 & 62,043.20 \\
			& 50 & 0.45 & 0.9 & 27.2 & 0.9632 & 0.7945 & 62,043.20 \\
			\cdashline{1-8}
			\multirow{3}{*}{FP16$\rightarrow$FP64}
			& 5  & 0.34 & 1.1 & 28.0 & 0.9636 & 0.7969 & 61,598.09 \\
			& 10 & 0.34 & 1.1 & 28.0 & 0.9636 & 0.7969 & 61,591.78 \\
			& 50 & 0.36 & 1.1 & 25.5 & 0.9635 & 0.7961 & 61,711.35 \\
			\cdashline{1-8}
			\multirow{3}{*}{FP32$\rightarrow$FP64}
			& 5  & 0.33 & 1.2 & 25.0 & 0.9636 & 0.7969 & 61,617.53 \\
			& 10 & 0.33 & 1.2 & 25.0 & 0.9636 & 0.7969 & 61,617.53 \\
			& 50 & 0.33 & 1.2 & 25.0 & 0.9636 & 0.7969 & 61,617.53 \\
			\bottomrule
		\end{tabular}
	}%
\end{table}

\begin{table}
	\centering
	\caption{
		Performance of k-means on Gaussian blob data with
		$n=100{,}000$, feature dimension $d=128$, and $k=100$ clusters.
		The table compares different distance kernels for $\rho\in\{5,10,50\}$. 
	}\label{tab:km-gaussblob3}
	\renewcommand{\arraystretch}{1.12}\resizebox{0.72\linewidth}{!}{%
		\begin{tabular}{l r r r r r r r}
			\toprule
			Method & $\rho$ & Time (s) & Speedup & Iter & AMI & ARI & SSE \\
			\midrule
			BF16 & - & 1.76 & 0.2 & 239.5 & 0.9721 & 0.8329 & 2,060,258.19 \\
			FP16 & - & 2.41 & 0.2 & 300.0 & 0.9584 & 0.7765 & 2,396,060.31 \\
			FP32 & - & 0.30 & 1.3 & 37.2 & 0.9567 & 0.7706 & 2,393,520.12 \\
			FP64 & - & 0.39 & 1.0 & 35.8 & 0.9567 & 0.7706 & 2,393,520.47 \\
			\midrule
			\multirow{3}{*}{BF16$\rightarrow$FP32}
			& 5  & 0.94 & 0.4 & 35.8 & 0.9567 & 0.7706 & 2,393,520.19 \\
			& 10 & 0.94 & 0.4 & 35.8 & 0.9567 & 0.7706 & 2,393,520.19 \\
			& 50 & 0.94 & 0.4 & 35.8 & 0.9567 & 0.7706 & 2,393,520.19 \\
			\cdashline{1-8}
			\multirow{3}{*}{FP16$\rightarrow$FP32}
			& 5  & 0.36 & 1.1 & 40.5 & 0.9567 & 0.7706 & 2,393,519.94 \\
			& 10 & 0.95 & 0.4 & 35.8 & 0.9567 & 0.7706 & 2,393,520.19 \\
			& 50 & 0.94 & 0.4 & 35.8 & 0.9567 & 0.7706 & 2,393,520.19 \\
			\midrule
			\multirow{3}{*}{BF16$\rightarrow$FP64}
			& 5  & 1.09 & 0.4 & 43.5 & 0.9569 & 0.7711 & 2,393,569.38 \\
			& 10 & 1.09 & 0.4 & 43.5 & 0.9569 & 0.7711 & 2,393,569.38 \\
			& 50 & 1.09 & 0.4 & 43.5 & 0.9569 & 0.7711 & 2,393,569.38 \\
			\cdashline{1-8}
			\multirow{3}{*}{FP16$\rightarrow$FP64}
			& 5  & 0.31 & 1.2 & 36.0 & 0.9567 & 0.7706 & 2,393,521.31 \\
			& 10 & 0.80 & 0.5 & 32.0 & 0.9568 & 0.7707 & 2,393,521.81 \\
			& 50 & 0.79 & 0.5 & 32.0 & 0.9568 & 0.7707 & 2,393,521.81 \\
			\cdashline{1-8}
			\multirow{3}{*}{FP32$\rightarrow$FP64}
			& 5  & 0.29 & 1.3 & 35.0 & 0.9567 & 0.7706 & 2,393,520.75 \\
			& 10 & 0.29 & 1.3 & 35.0 & 0.9567 & 0.7706 & 2,393,520.75 \\
			& 50 & 0.29 & 1.3 & 35.0 & 0.9567 & 0.7706 & 2,393,520.75 \\
			\bottomrule
		\end{tabular}
	}%
\end{table}

\begin{table}
	\centering
	\caption{
		Performance of k-means on Gaussian blob data with
		$n=100{,}000$, feature dimension $d=128$, and $k=500$ clusters.
		The table compares different distance kernels for $\rho\in\{5,10,50\}$. 
	}\label{tab:km-gaussblob4}
	\renewcommand{\arraystretch}{1.12}\resizebox{0.72\linewidth}{!}{%
		\begin{tabular}{l r r r r r r r}
			\toprule
			Method & $\rho$ & Time (s) & Speedup & Iter & AMI & ARI & SSE \\
			\midrule
			BF16 & - & 2.40 & 0.2 & 174.8 & 0.9689 & 0.7759 & 2,244,820.56 \\
			FP16 & - & 2.42 & 0.1 & 111.8 & 0.9581 & 0.7301 & 2,496,745.44 \\
			FP32 & - & 0.29 & 1.2 & 13.5 & 0.9572 & 0.7261 & 2,508,462.62 \\
			FP64 & - & 0.35 & 1.0 & 13.8 & 0.9572 & 0.7261 & 2,508,462.14 \\
			\midrule
			\multirow{3}{*}{BF16$\rightarrow$FP32}
			& 5  & 1.47 & 0.2 & 13.8 & 0.9572 & 0.7261 & 2,508,462.19 \\
			& 10 & 1.47 & 0.2 & 13.8 & 0.9572 & 0.7261 & 2,508,462.19 \\
			& 50 & 1.47 & 0.2 & 13.8 & 0.9572 & 0.7261 & 2,508,462.19 \\
			\cdashline{1-8}
			\multirow{3}{*}{FP16$\rightarrow$FP32}
			& 5  & 0.27 & 1.3 & 13.8 & 0.9576 & 0.7277 & 2,487,734.81 \\
			& 10 & 1.66 & 0.2 & 13.8 & 0.9572 & 0.7261 & 2,508,462.19 \\
			& 50 & 1.47 & 0.2 & 13.8 & 0.9572 & 0.7261 & 2,508,462.19 \\
			\midrule
			\multirow{3}{*}{BF16$\rightarrow$FP64}
			& 5  & 1.34 & 0.3 & 13.0 & 0.9573 & 0.7265 & 2,499,024.38 \\
			& 10 & 1.34 & 0.3 & 13.0 & 0.9573 & 0.7265 & 2,499,024.38 \\
			& 50 & 1.34 & 0.3 & 13.0 & 0.9573 & 0.7265 & 2,499,024.38 \\
			\cdashline{1-8}
			\multirow{3}{*}{FP16$\rightarrow$FP64}
			& 5  & 0.26 & 1.4 & 13.8 & 0.9576 & 0.7277 & 2,487,734.50 \\
			& 10 & 1.59 & 0.2 & 13.8 & 0.9573 & 0.7264 & 2,504,334.19 \\
			& 50 & 1.40 & 0.3 & 13.8 & 0.9573 & 0.7264 & 2,504,334.19 \\
			\cdashline{1-8}
			\multirow{3}{*}{FP32$\rightarrow$FP64}
			& 5  & 0.24 & 1.5 & 12.2 & 0.9572 & 0.7261 & 2,508,463.19 \\
			& 10 & 0.24 & 1.5 & 12.2 & 0.9572 & 0.7261 & 2,508,463.19 \\
			& 50 & 0.24 & 1.5 & 12.2 & 0.9572 & 0.7261 & 2,508,463.19 \\
			\bottomrule
		\end{tabular}
	}%
\end{table}

The uniform low-precision kernels, FP16 and BF16, are not reliable in these experiments. Although the individual distance computations are cheaper in low precision, the resulting loss of accuracy increases the number of k-means iterations substantially.
In several cases, uniform FP16 reaches the maximum number of $300$ iterations, and BF16 also requires many more iterations than FP32 or FP64. Consequently, the total runtime of the uniform low-precision methods is often much larger than that of FP32 or FP64.  In contrast, the mixed-precision variants generally recover iteration counts and clustering quality close to those of uniform FP32 and FP64.  
The AMI and ARI values are typically very close to the FP64 reference values, and the final \sse\ values are also comparable. 
Among the mixed-precision methods, the FP16$\rightarrow$FP32 and FP16$\rightarrow$FP64 variants with $\rho=5$ give the most favorable trade-offs in the reported tests.  For instance, FP16$\rightarrow$FP64 with $\rho=5$ achieves speedups of $1.30$, $1.14$, $1.24$, and $1.36$ in Tables~\ref{tab:km-gaussblob1}, \ref{tab:km-gaussblob2}, \ref{tab:km-gaussblob3}, and \ref{tab:km-gaussblob4}, respectively, while preserving clustering quality close to the FP64 baseline. 
BF16$\rightarrow$FP64 is highly effective in Table~\ref{tab:km-gaussblob1}, where it reaches a speedup of $1.91$, but it is less competitive in the higher-dimensional cases, where the fallback overhead becomes
more pronounced.
The uniform FP32 kernel remains a strong baseline without hyperparameter tuning; consequently, adding fallback offers little additional benefit. It converges in essentially the same number of iterations as FP64 and gives nearly identical AMI, ARI, and SSE values, while
being faster than FP64 in all cases.

The choice of the threshold parameter $\rho$ has a visible effect on
performance.  In the low-dimensional cases $d=10$, changing $\rho$ from
$5$ to $10$ or $50$ has only a mild impact for most mixed-precision
variants.  For $d=128$, however, the larger thresholds often lead to
substantially higher runtimes.  This is particularly clear in
Tables~\ref{tab:km-gaussblob3} and~\ref{tab:km-gaussblob4}, where
FP16$\rightarrow$FP32 and FP16$\rightarrow$FP64 with $\rho=10$ or
$\rho=50$ are much slower than the corresponding runs with $\rho=5$.  Thus,
although the method is not highly sensitive to $\rho$ in all regimes, the most aggressive
choice, $\rho=5$, appears to be robust among the tested values. This choice switches to high precision only when the relative error in the pairwise distance computation is not guaranteed to be below 0.67; see~\eqref{eq:quantis-rel-bound-expand} and \eqref{eq:fallback-rule-mp}.

Overall, these experiments show that uniform FP16 and BF16 should be used with care in k-means clustering, since their lower arithmetic cost can be outweighed by slower or unstable convergence, and, on the other hand, uniform FP64 is preferred over uniform FP32 reserved for cases where maximum numerical precision is required and longer runtime is acceptable. Mixed-precision distance kernels with fallback provide a more reliable alternative: they retain much of the speed of low-precision arithmetic while correcting computations that would otherwise degrade the k-means iterations. In the present tests, the FP16$\rightarrow$FP64 and FP16$\rightarrow$FP32 variants with $\rho=5$ offer the best overall balance between speed, stability, and clustering quality.

\subsubsection{Image segmentation}
In this section we test the k-means methods for image segmentation.
Given an input image of height $H$ and width $W$, we treat each pixel as one sample and construct a feature matrix $P \in \mathbb{R}^{N \times 5}$ with $N = H W$.
Each row of $P$ concatenates color and spatial information of one pixel, namely, $(R,G,B,x,y)$, where $(R,G,B)$ are the normalized color channels and $(x,y)$ are normalized pixel coordinates. 
Unless stated otherwise, each feature dimension is rescaled using min-max normalization so that the color and spatial coordinates have comparable magnitudes in the distance computations.

\begin{table*}
	\centering
	\caption{Image-segmentation k-means results on two ImageNet images with
		$k\in\{8,32\}$ clusters.}
	\label{tab:img-seg}
	\setlength{\tabcolsep}{4.2pt}
	\renewcommand{\arraystretch}{1.12}
	\resizebox{\textwidth}{!}{
		\begin{tabular}{l rrr  rrr rrr rrr}
			\toprule
			 & \multicolumn{6}{c}{\texttt{ILSVRC2012\_val\_00011129}}
			& \multicolumn{6}{c}{\texttt{ILSVRC2012\_val\_00006582}} \\
			\cmidrule(lr){2-7} \cmidrule(lr){8-13}
			 & \multicolumn{3}{c}{$k=8$}
			 & \multicolumn{3}{c}{$k=32$}
			& \multicolumn{3}{c}{$k=8$}
			& \multicolumn{3}{c}{$k=32$} \\
			\cmidrule(lr){2-4} \cmidrule(lr){5-7}
			\cmidrule(lr){8-10} \cmidrule(lr){11-13}
			Method & \texttt{Diff} & Iter. & Time $(s)$
			& \texttt{Diff} & Iter. & Time $(s)$
			& \texttt{Diff} & Iter. & Time $(s)$
			& \texttt{Diff} & Iter. & Time $(s)$ \\
			\midrule
			FP16
			& 0.12\% & 300.0 & 3.71
			& 7.62\% & 300.0 & 3.67
			& 0.29\% & 300.0 & 3.74
			& 1.31\% & 300.0 & 3.65 \\
			
			BF16
			& 1.14\% & 300.0 & 3.72
			& 18.47\% & 300.0 & 3.68
			& 1.48\% & 300.0 & 3.77
			& 32.52\% & 300.0 & 3.67 \\
			
			FP32
			& 0 & 34.7 & 0.46
			& 0.02\% & 107.3 & 1.32
			& 0.04\% & 92.0 & 1.31
			& 0.01\% & 145.3 & 1.78 \\
			
			FP64
			& - & 33.7 & 0.42
			& - & 107.0 & 1.30
			& - & 91.3 & 1.16
			& - & 145.7 & 1.78 \\
			
			FP16$\rightarrow$FP32
			& 0.08\% & 300.0 & 3.71
			& 0.15\% & 300.0 & 3.68
			& 0.14\% & 241.0 & 2.99
			& 0.14\% & 300.0 & 3.68 \\
			
			BF16$\rightarrow$FP32
			& 0.01\% & 34.7 & 0.43
			& 0.02\% & 107.3 & 1.33
			& 0.04\% & 92.3 & 1.17
			& 0.03\% & 146.0 & 1.81 \\
			
			FP16$\rightarrow$FP64
			& 0.08\% & 300.0 & 3.81
			& 0.18\% & 300.0 & 3.70
			& 0.08\% & 218.3 & 2.88
			& 0.24\% & 234.3 & 2.88 \\
			
			BF16$\rightarrow$FP64
			& 0.36\% & 21.7 & 0.30
			& 14.26\% & 46.0 & 0.58
			& 1.01\% & 49.0 & 0.70
			& 2.31\% & 80.7 & 1.01 \\
			
			FP32$\rightarrow$FP64
			& 0 & 34.7 & 0.43
			& 0.02\% & 107.7 & 1.31
			& 0.04\% & 92.0 & 1.15
			& 0.03\% & 145.7 & 1.77 \\
			\bottomrule
		\end{tabular}
	}
\end{table*}

We run k-means with cluster number $k \in \{8, 32\}$ to capture both coarse and fine-grained segmentations. The k-means method iteratively alternates between assigning each pixel feature to its nearest centroid in the normalized feature space and updating centroids by averaging the features of assigned samples. After convergence, we reshape the resulting assignment vector into a $H \times W$ label map. 
For each k-means method, we execute a short warm-up followed by four different executions and report the average runtime and other metrics of clustering.  Table~\ref{tab:img-seg} summarizes the results for two images from ImageNet \cite{ddsl09}.  The method in uniform FP64 serves as the reference for fidelity comparisons, and \texttt{Diff} measures the permutation-invariant percentage of pixel labels that differ from this reference after optimal one-to-one alignment of cluster IDs (to account for the label-permutation symmetry of k-means). 

The uniform low-precision kernels are unreliable for this task.  Both BF16 and FP16 reach the maximum number of iterations in all four test cases. The degradation is especially pronounced for BF16 at $k=32$, where the disagreement rates reach $18.47\%$ and $32.52\%$ for the two images.  This indicates that using reduced precision alone can substantially change the segmentation when the number of clusters is increased.
On the other hand, uniform FP32 closely aligns with the FP64 reference in all metrics.  
The mixed-precision variants show more varied behavior. This indicates that, under the present feature scaling, FP32 is already sufficient to reproduce the FP64 segmentation to high accuracy.

The mixed-precision methods exhibit different behavior depending on the base precision.  The FP16-based variants improve the segmentation quality relative to uniform FP16, but they still often require many iterations.  In particular, FP16$\rightarrow$FP32 reaches the maximum iteration count in three of the four cases, and FP16$\rightarrow$FP64 reaches it in two cases.  
Hence, although FP64 fallback reduces the disagreement rate, it does not fully remove the convergence difficulty caused by the FP16 base computation. The BF16-based variants show a more nuanced trade-off.  BF16$\rightarrow$FP32 closely matches the FP64 reference with comparable iteration counts and runtimes to FP32 and FP64.
In contrast, BF16$\rightarrow$FP64 is often the fastest method because it converges in substantially fewer iterations, but this speed can come at the cost of noticeably different
segmentations. The most pronounced case is \texttt{ILSVRC2012\_val\_00011129} with $k=32$, where BF16$\rightarrow$FP64 is about $2.2\times$ faster than FP64 but has a disagreement rate of $14.26\%$. 
The FP32$\rightarrow$FP64 variant is the most conservative mixed-precision method.  It essentially reproduces the FP64 reference, with disagreement rates no larger than $0.04\%$, while achieving runtimes comparable to or slightly smaller than those of FP32 and FP64.  Since uniform FP32 is already highly accurate in this experiment, the benefit of adding FP64 fallback to FP32 is modest.

Overall, our experiments show that the image segmentation by k-means is sensitive to uniform low-precision (FP16 and BF16) arithmetic, especially as the number of clusters $k$ increases, and the k-means using the mixed-precision strategy substantially reduced the \texttt{Diff} for image segmentation compared to the low-precision arithmetic. Therefore, FP16 and BF16 should not be used as uniform precisions for this task, since their lower arithmetic cost is outweighed by poor convergence and larger segmentation discrepancies.  
The lower-precision mixed-precision variants can improve runtime in some cases, but their accuracy depends on the image and the number of clusters.

\section{Conclusions}\label{sec:conclusion}

In this paper, we studied the use of mixed-precision arithmetic in Lloyd's algorithm for $k$-means clustering, with a particular focus on the two main numerical components of the iteration---the Euclidean distance computation used in the assignment step and the center update. Guided by rounding error analysis, we identified when reduced-precision arithmetic can be used safely and when a higher-precision correction is needed. This led to a mixed-precision framework in which the computationally dominant distance computations are performed primarily in low precision, with an inexpensive reliability test used to detect significant numerical cancellation in the computed distance.

The analysis confirms that the commonly used expanded formula~\eqref{eq:dist-expanded} is computationally attractive, but that its accuracy depends on the relative size of the distance being computed. In particular, near-point configurations may lead to significant cancellation and hence unreliable low-precision results. Our mixed-precision distance kernel addresses this issue by using the expanded formula in low precision when the error can be controlled and falling back to the direct formula~\eqref{eq:dist-direct} in higher precision otherwise. 
The center update was treated more conservatively---our analysis indicates that sufficient precision is needed in this step to preserve convergence behavior, especially in later iterations of Lloyd's algorithm.

We provided CUDA implementations of the resulting mixed-precision $k$-means variants and compared them with uniform-precision implementations and standard library routines such as the \texttt{cdist} routine in \texttt{PyTorch}. The numerical experiments show that the proposed mixed-precision distance kernels can achieve accuracy comparable with uniform IEEE double or single computations while providing speedups of up to $4\times$ in the tested settings. 
They are also robust in near-point scenarios, where uniform low-precision methods may suffer substantial accuracy loss. These results demonstrate that mixed precision can offer a favorable balance between performance and numerical reliability for the distance computations in $k$-means clustering.
Experiments on synthetic data clustering and image segmentation further show that carefully designed reduced-precision computations do not significantly degrade the clustering quality. In many cases, the increase in \sse\ is small, and the resulting cluster assignments remain comparable with those obtained using uniform higher precision. These observations suggest that mixed precision is a viable strategy for accelerating k-means, provided that the use of low precision is guided by appropriate numerical safeguards, such as~\eqref{eq:fallback-rule-mp}.

The ideas and methods developed in this work are expected to benefit a broad class of Euclidean-distance-based algorithms, including, but not limited to, those used in information retrieval and retrieval-augmented generation (RAG)~\cite{chgu24b, jds10,lewis2020retrieval}, data clustering~\cite{chgu24a, eksx96}, manifold learning~\cite{tdl00}, and time-series analysis~\cite{11397443}.

\bibliographystyle{siamplain}
\bibliography{references}

\end{document}